\newcommand{\md}{\mathrm{d}}
\newcommand{\lag}{\langle}
\newcommand{\rag}{\rangle}
\renewcommand{\AA}{\mathcal{A}}
\newcommand{\FF}{\mathcal{F}}
\newcommand{\PP}{\mathcal{P}}
\newcommand{\R}{\mathbb{R}}
\newcommand{\N}{\mathbb{N}}
\newcommand{\Z}{\mathbb{Z}}
\newcommand{\T}{\mathbb{T}}
\numberwithin{equation}{section}
\theoremstyle{definition}
\newtheorem{defi}{Definition}[section]
\theoremstyle{plain}
\newtheorem{prop}{Proposition}[section]
\newtheorem{theo}{Theorem}[section]
\newtheorem{lemme}{Lemma}[section]
\theoremstyle{remark}
\newtheorem{rmq}{Remark}[section]
\title{Approximate controllability for a \textsc{2d} Grushin equation with potential having an internal singularity}
\author{
Morgan \textsc{Morancey}
\footnote{I2M UMR 7373, Universit\'e Aix-Marseille.   
email: morgan.morancey@univ-amu.fr}
\thanks{The author was partially supported by the ``Agence Nationale de la Recherche'' (ANR),
Projet Blanc EMAQS number ANR-2011-BS01-017-01 and by CMLA UMR 8536, ENS Cachan, 61 avenue du Président Wilson, 94235 Cachan, FRANCE.} }
\date{}
\begin{document}

\maketitle

\hrule
\begin{abstract}
This paper is dedicated to approximate controllability for Grushin equation on the rectangle $(x,y) \in (-1,1) \times (0,1)$ with an inverse square potential. This model corresponds to the heat equation for the Laplace-Beltrami operator associated to the Grushin metric on $\R^2$, studied by Boscain and Laurent. The operator is both degenerate and singular on the line $\{ x=0 \}$. 

The approximate controllability is studied through unique continuation of the adjoint system.
For the range of singularity under study, approximate controllability is proved to hold whatever the degeneracy is.

Due to the internal inverse square singularity, a key point in this work is the study of well-posedness. An extension of the singular operator is designed imposing suitable transmission conditions through the singularity.

Then, unique continuation relies on the Fourier decomposition of the \textsc{2d} solution in one variable and Carleman estimates for the \textsc{1d} heat equation solved by the Fourier components. The Carleman estimate uses a suitable Hardy inequality.
\end{abstract}
\hrule

\paragraph*{Keywords : } unique continuation, degenerate parabolic equation, singular potential, Grushin operator, self-adjoint extensions, Carleman estimate.

\section{Introduction}

\subsection{Main result}

\noindent
We consider for $\gamma >0$ the following degenerate singular parabolic equation
\begin{equation} \label{grushin_2d}
\left\{
\begin{aligned}
& \partial_t f - \partial^2_{xx} f - |x|^{2 \gamma} \partial^2_{yy} f + \frac{c}{x^2} f = u(t,x,y) \chi_{\omega}(x,y),   &(t,x,y) \in (0,T) \times \Omega,&
\\
&f(t,x,y) =  0, & (t,x,y) \in (0,T) \times \partial \Omega,&
\end{aligned}
\right.
\end{equation}
with initial condition
\begin{equation} \label{grushin_2d_ci}
f(0,x,y) = f^0(x,y), \quad (x,y) \in \Omega.
\end{equation}
The domain is $\Omega := (-1,1) \times (0,1)$ and $\omega$, the control domain, is an open subset of $\Omega$. The function $\chi$ is the indicator function. The coefficient $c$ of the singular potential is real and will be restricted to $\left( -\frac{1}{4}, \frac{3}{4} \right)$. The degeneracy set $\{x=0\}$ coincides with the singularity set ; it separates the domain $\Omega$ in two connected components.
Due to the singular potential, the first difficulty is to give a meaning to solutions of (\ref{grushin_2d}). Through the study of an associated \textsc{1d} heat equation, we will design a suitable extension of the considered operator generating a continuous semigroup. The solutions considered in this article will be related to this semigroup. This is detailed in Section~\ref{sect_bien_pose}. Before stating the controllability result, we give some motivations and justify the range of constants $c \in \left( -\frac{1}{4}, \frac{3}{4} \right)$.

\medskip
In \cite{BoscainLaurent}, Boscain and Laurent studied the Laplace-Beltrami operator for the Grushin-like metric given by the orthonormal basis $X= \begin{pmatrix} 1 \\ 0 \end{pmatrix}$ and $Y = \begin{pmatrix} 0 \\ |x|^{\gamma} \end{pmatrix}$ on $\R \times \mathbb{T}$ with $\gamma >0$ i.e.
\begin{equation} \label{laplace-beltrami}
L u := \partial^2_{xx} u + |x|^{2 \gamma} \partial^2_{yy} u - \frac{\gamma}{x} \partial_x u.
\end{equation}
They proved that this operator with domain $C^\infty_0\big( (\R \backslash \{0\}) \times \mathbb{T} \big)$ is essentially self-adjoint on $L^2(\R\times \mathbb{T})$ if and only if $\gamma \geq 1$. Thus, for the heat equation associated to this Laplace-Beltrami operator, no information passes through the singular set $\{x=0\}$ when $\gamma \geq 1$. This prevents controllability from one side of the singularity.

\smallskip
The change of variables $u= |x|^{\gamma/2} v$, leads to study
\begin{equation} \label{laplace-beltrami_chgtvar}
\Delta_\gamma v = \partial^2_{xx} v + |x|^{2 \gamma} \partial^2_{yy} v - \frac{\gamma}{2} \left( \frac{\gamma}{2} +1 \right) \frac{v}{x^2}.
\end{equation}
The model (\ref{grushin_2d}) can then be seen as a heat equation for this operator. By choosing a coefficient $c$ instead of $\frac{\gamma}{2} \left( \frac{\gamma}{2} +1 \right)$ we authorize a wider class of singular potentials and decouple the effects of the degeneracy and the singularity for a better understanding of each one of these phenomena. Adapting the arguments of \cite{BoscainLaurent}, one obtains that for any $\gamma >0$, the operator $-\partial^2_{xx} -|x|^{2\gamma} \partial^2_{yy} + \frac{c}{x^2}$ with domain $C^\infty_0(\Omega \backslash \{x=0\})$ is essentially self-adjoint on $L^2(\Omega)$ if and only if $c \geq \frac{3}{4}$.
Thus, to look for controllability properties, our study focuses on the range of constants $c < \frac{3}{4}$.

\smallskip
The lower bound $c > -\frac{1}{4}$ for the range of constants considered comes from well posedness issues linked to the use of the following Hardy inequality (see e.g. \cite{CannarsaMartinezVancostenoble08} for a simple proof)
\begin{equation} \label{Hardy_1d_(0,1)}
\int_0^1 \frac{z(x)^2}{x^2} \md x \leq 4 \int_0^1 z_x(x)^2 \md x, \quad \forall z \in H^1((0,1),\R) \text{ with } z(0)=0.
\end{equation}
The critical case in the Hardy inequality $c= -\frac{1}{4}$ is not directly covered by the technics of this article.

\medskip
\noindent
The notion of controllability under study in this article is given in the following definition.
\begin{defi} \label{Def:ApproxControl}
Let $T>0$ and $\omega \subset \Omega$. The problem (\ref{grushin_2d}) is said to be approximately controllable from $\omega$ in time $T$ if for any $(f^0 , f^T) \in L^2(\Omega)^2$, for any $\varepsilon >0$, there exists $u \in L^2((0,T) \times \omega)$ such that the solution of (\ref{grushin_2d})-(\ref{grushin_2d_ci}), in the sense of Proposition~\ref{prop_bien_pose_2d}, satisfies
\begin{equation*}
|| f(T) - f^T ||_{L^2(\Omega)} \leq \varepsilon.
\end{equation*}
\end{defi}

\noindent
The main result of this article is the following characterization of approximate controllability.
\begin{theo} \label{th_controle_approche_2d}
Let $T>0$, $\gamma >0$ and $c \in (-\frac{1}{4}, \frac{3}{4})$. Let $\omega$ be an open subset of $\Omega$. Then, (\ref{grushin_2d}) is approximately controllable from $\omega$ in time $T$ in the sense of Definition~\ref{Def:ApproxControl}.
\end{theo}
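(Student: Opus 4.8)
The plan is to reduce approximate controllability to a unique continuation property for the adjoint system, and then to establish that property by a one-dimensional analysis mode by mode. First I would set up the duality. Writing the evolution abstractly as $\partial_t f + \AA f = Bu$, where $\AA$ is the self-adjoint extension built in Section~\ref{sect_bien_pose} and $Bu = u\chi_\omega$ is the control operator (so that $B^\ast g = g|_\omega$), a classical duality argument shows that approximate controllability in the sense of Definition~\ref{Def:ApproxControl} is equivalent to the following unique continuation property: if $g$ is a solution of the heat flow associated to $\AA^\ast$ and $g = 0$ on $(0,T)\times\omega$, then $g \equiv 0$. Since $\AA$ is self-adjoint, the adjoint problem is again the Grushin heat flow (up to the time reversal $\tau = T-t$), so it suffices to prove this unique continuation statement.

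Next I would reduce the problem to one space dimension. As $\omega$ is open it contains a box $(a,b)\times(c,d)$, and after shrinking we may assume $(a,b)\subset(0,1)$ lies on one side of the singular line $\{x=0\}$. On the region $x\in(a,b)$ the coefficients of \eqref{grushin_2d} are analytic and, crucially, independent of $y$; parabolic regularity then gives that $g$ is analytic in $y$ there. Hence, fixing $(t,x)$, the real-analytic function $y\mapsto g(t,x,y)$ vanishes on $(c,d)$ and therefore on all of $(0,1)$, so the vanishing of $g$ propagates from the box to the full strip $(0,T)\times(a,b)\times(0,1)$. Expanding $g(t,x,y)=\sum_{k\geq 1} g_k(t,x)\sin(k\pi y)$ in the Dirichlet sine basis, each coefficient solves the one-dimensional singular heat equation
\[
\partial_t g_k - \partial^2_{xx} g_k + \Big((k\pi)^2 |x|^{2\gamma} + \frac{c}{x^2}\Big) g_k = 0
\]
on $(-1,0)\cup(0,1)$, subject to the transmission conditions at $x=0$ that define $\AA$ and to $g_k(t,\pm 1)=0$; moreover $g_k = 0$ on $(0,T)\times(a,b)$ for every $k$.

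The core of the argument is the one-dimensional unique continuation. For each fixed $k$ I would establish, via a global Carleman estimate for the singular operator $-\partial^2_{xx} + (k\pi)^2|x|^{2\gamma} + c/x^2$, that $g_k\equiv 0$. The bounded potential $(k\pi)^2|x|^{2\gamma}$ is harmless and is absorbed once the Carleman parameter is taken large. The genuine difficulty is the inverse-square term $c/x^2$, which is not a lower-order perturbation in the usual sense: to control the singular quadratic form it produces I would use a weight adapted to the singularity at $x=0$ and bound the critical term through the Hardy inequality \eqref{Hardy_1d_(0,1)}. This is precisely where the restriction $c>-\frac14$ enters, since that threshold is what makes $\int (c/x^2)\,z^2$ dominated by $\int z_x^2$. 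Starting from the vanishing of $g_k$ on $(0,T)\times(a,b)$, the Carleman estimate first forces $g_k\equiv 0$ on the connected component $(0,1)$. The main obstacle I anticipate is then to cross the singularity and obtain vanishing on $(-1,0)$ as well: here the transmission conditions at $x=0$ are essential, as they couple the traces and the flux of $g_k$ across the interface and thus transport the trivial Cauchy-type data from one side to the other. Carrying out this interface propagation inside the Carleman framework, with a weight singular at $x=0$, is where the technical effort will concentrate.

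Finally, having proved $g_k\equiv 0$ for every $k\geq 1$, the Fourier expansion yields $g\equiv 0$ on $(0,T)\times\Omega$, which is exactly the unique continuation property required. By the duality of the first step, this establishes the approximate controllability asserted in Theorem~\ref{th_controle_approche_2d}.
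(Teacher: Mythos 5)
Your overall architecture (duality, reduction to the one-dimensional Fourier modes, Carleman estimate plus Hardy inequality) matches the paper's, but the two steps you flag as ``where the technical effort will concentrate'' are precisely where the proof lives, and the mechanism you propose for them would not work. Consider first the crossing of the singularity. You plan to prove $g_k\equiv 0$ on the component containing the control by a Carleman estimate with a weight singular at $x=0$, and then to transport the Cauchy data across the interface ``inside the Carleman framework''. But on the control side, before anything is proved, $g_k$ carries a nontrivial singular part $c_1|x|^{\nu+1/2}+c_2|x|^{-\nu+1/2}$ near $x=0$ (this is built into the domain (\ref{def_D(A)})); such a function is not $H^2$ near $0$, the integrations by parts underlying the Carleman identity produce uncontrolled interface contributions, and the Hardy inequality fails for it. The paper avoids this entirely by reversing the order of operations: classical unique continuation for uniformly parabolic operators (Saut--Scheurer), applied on $(-1,-\varepsilon)\times(0,1)$ for every $\varepsilon>0$ where the operator is uniformly elliptic, forces $g$ to vanish on the whole connected component containing $\omega$ up to the singular line (Proposition~\ref{prop_uc_1er_cote}). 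This kills the singular coefficients on that side, and the crossing of the singularity is then a purely \emph{algebraic} consequence of the invertibility of the transmission matrix (\ref{cond_transmission_matricielle}): $c_1^-=c_2^-=0$ implies $c_1^+=c_2^+=0$. No Carleman estimate is used at the interface. Your analyticity-in-$y$ step only propagates the vanishing in $y$; you still need the propagation in $x$ up to $\{x=0\}$, and that is exactly what the parabolic unique continuation provides.

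Second, the Carleman estimate on the remaining side. After the reduction above, $g_k$ restricted to that side is purely regular and satisfies both $g_k(t,0)=0$ and $\partial_x g_k(t,0)=0$. The extra Neumann condition is not a convenience: the weight $\sigma=\theta(t)x^b$ with $b\in(0,1)$ generates terms of order $x^{b-4}z^2$, and controlling them requires the strengthened Hardy inequality of Proposition~\ref{Prop:Hardy} with exponent $\alpha=b-2\in(-2,-1)$, which holds only because $z'(0)=0$; the standard inequality (\ref{Hardy_1d_(0,1)}) you invoke does not reach that range. This is also where the upper bound $c<\tfrac{3}{4}$ enters, a point your sketch never addresses: for $\nu\in(\tfrac{1}{2},1)$ one must take $b=2-2\nu$, which stays in $(0,1)$ exactly because $\nu<1$. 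In short, the proposal is a correct outline of the strategy but omits the two ideas that make it work: the ordering that eliminates the singular part before any Carleman estimate is attempted, and the refined Hardy inequality enabled by the vanishing Neumann trace at the singular endpoint.
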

Except for the critical case of the Hardy inequality ($c=-\frac{1}{4}$), this theorem fills the gap, for the approximate controllability property, between validity of Hardy inequality ($c \geq -\frac{1}{4}$) and the essential self-adjointness property of \cite{BoscainLaurent} for $c \geq \frac{3}{4}$.

\begin{rmq}
One key point for this approximate controllability result is to give a meaning to the solutions of (\ref{grushin_2d}). As it will be noticed (see e.g. Sect.~\ref{subsect_extensions}) there are various possible definitions of solutions (depending mostly on what transmission conditions are imposed at the singularity). The validity of the approximate controllability property under study will strongly depend on these transmission conditions. This is why, in Definition~\ref{Def:ApproxControl}, it is precised that the solutions are understood in the sense of Proposition~\ref{prop_bien_pose_2d}.
\end{rmq}

\begin{rmq}
Going back to the Laplace-Beltrami operator studied by Boscain and Laurent (\ref{laplace-beltrami_chgtvar}), we would get approximate controllability for the heat equation associated to the operator $\Delta_\gamma$ for any $\gamma \in (0,1)$. 
To be closer to the setting they studied one can notice that, essentially with the same proof, the approximate controllability result of Theorem~\ref{th_controle_approche_2d} also holds on $(-1,1) \times \T$. This will be detailed in Remark~\ref{rk_conditions_periodiques}.
\end{rmq}

By a classical duality argument, approximate controllability will be studied through unique continuation of the adjoint system.
The unique continuation result will be proved by a suitable Carleman inequality for an associated sequence of \textsc{1d} problems. This Carleman estimate rely on a precise Hardy inequality.

\bigskip
The model (\ref{grushin_2d}) can also be seen as an extension of \cite{BeauchardCannarsaGuglielmi} where Beauchard \textit{et al.} studied the null controllability without the singular potential (i.e. in the case $c=0$).
The authors proved that null controllability holds if $\gamma \in (0,1)$ and does not hold if $\gamma >1$. In the case $\gamma=1$, for $\omega$ a strip in the $y$ direction, null controllability holds if and only if the time is  large enough.

The inverse square potential for the Grushin equation has already been taken into account by Cannarsa and Guglielmi in \cite{CannarsaGuglielmi13} but in the case where both degeneracy and singularity are at the boundary. With our notations, they proved null controllability in sufficiently large time for $\Omega = (0,1) \times (0,1)$, $\omega = (a,b) \times (0,1)$, $\gamma =1$ and any $c > - \frac{1}{4}$. They also proved that approximate controllability holds for any control domain $\omega \subset \Omega$, any $\gamma >0$ and any $c > -\frac{1}{4}$. Thus, the fact that our model presents an internal singularity instead of a boundary singularity deeply affects the approximate controllability property as it does not hold when $c> \frac{3}{4}$.

\noindent
As in \cite{BeauchardCannarsaGuglielmi}, the results of this article will strongly use an associated sequence of \textsc{1d} problems. 
As a by-product of the proof of Theorem~\ref{th_controle_approche_2d}, we obtain the following approximate controllability result for the \textsc{1d} heat equation with a singular inverse square potential.
\begin{theo} \label{th_controle_approche_chaleur_1d}
Let $T > 0$ and $c \in \left( -\frac{1}{4}, \frac{3}{4} \right)$. Let $\omega$ be an open subset of $(-1,1)$. Then approximate controllability holds for 
\begin{equation} \label{chaleur_1d}
\left\{
\begin{aligned}
&\partial_t f - \partial^2_{xx} f + \frac{c}{x^2} f = u(t,x) \chi_{\omega}(x), &(t,x)& \in (0,T) \times (-1,1),
\\
&f(t,-1) = f(t,1) = 0, &t& \in (0,T),
\\
&f(0,x) = f^0(x),  &x& \in (-1,1),
\end{aligned}
\right.
\end{equation}
where the solutions of (\ref{chaleur_1d}) are given by Proposition~\ref{prop_bien_pose_1d}.
\end{theo}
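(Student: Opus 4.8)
The plan is to prove Theorem~\ref{th_controle_approche_chaleur_1d} by the standard duality reduction: approximate controllability of the 1d heat equation \eqref{chaleur_1d} is equivalent to unique continuation for the adjoint system. Since the operator $-\partial_{xx}^2 + \frac{c}{x^2}$ (with the extension provided by Proposition~\ref{prop_bien_pose_1d}) is self-adjoint, the adjoint equation is the same backward parabolic equation with zero source. Thus it suffices to show that if $g$ solves

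\begin{equation*}
\partial_t g - \partial^2_{xx} g + \frac{c}{x^2} g = 0 \text{ on } (0,T)\times(-1,1), \quad g(t,\pm1)=0,
\end{equation*}

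and $g \equiv 0$ on $(0,T)\times\omega$, then $g \equiv 0$ on $(0,T)\times(-1,1)$. The main obstacle, compared with the classical non-singular case, is that the singularity at $x=0$ splits the domain into $(-1,0)$ and $(0,1)$ and the transmission conditions built into the extension of Proposition~\ref{prop_bien_pose_1d} must be respected; a naive Carleman estimate across $x=0$ is unavailable because of the inverse-square blow-up.

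First I would establish a quantitative Carleman estimate for the singular 1d operator on a half-interval, say $(0,1)$, with a weight $\varphi(t,x)$ that is singular in time at $t=0$ and $t=T$ (of the usual form $\theta(t)\psi(x)$ with $\theta$ blowing up at the endpoints) and a spatial profile $\psi$ chosen to be monotone and bounded, vanishing appropriately so that the boundary terms at $x=1$ carry the correct sign. The crucial new ingredient is that the lower-order singular term $\frac{c}{x^2}g$ must be absorbed: here I would invoke the Hardy inequality \eqref{Hardy_1d_(0,1)}, which for $c > -\frac14$ guarantees that the quadratic form $\int_0^1 (z_x^2 + \frac{c}{x^2}z^2)\,\md x$ is coercive, i.e. bounded below by a positive multiple of $\int_0^1 z_x^2\,\md x$. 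This coercivity is exactly what lets the singular potential be treated as a controllable perturbation rather than an obstruction, and it is why the threshold $c=-\frac14$ is excluded. I would carry out the Carleman computation on each half-interval separately, so that the singularity at $x=0$ sits at the \emph{boundary} of the Carleman domain where the weight and the transmission conditions can be handled cleanly, rather than in the interior.

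Next, from the Carleman estimate I would derive unique continuation in the standard way. Suppose $g$ vanishes on $(0,T)\times\omega$ for an open $\omega$; by possibly shrinking, $\omega$ contains a subinterval lying entirely in one connected component, say a small interval $(a,b)\subset(0,1)$. Applying the Carleman estimate with observation region $(a,b)$ yields that $g\equiv0$ on that component $(0,1)$. The transmission conditions encoded in the extension of Proposition~\ref{prop_bien_pose_1d} then propagate the vanishing of $g$ and of its relevant trace through the singularity to the other component $(-1,0)$, where a symmetric Carleman estimate (or the analytic structure of the problem in time) forces $g\equiv0$ there as well. If $\omega$ straddles $x=0$ or touches only one side, one treats each component by the Carleman inequality adapted to that half-interval. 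Analyticity in time of parabolic solutions can be used to upgrade vanishing on a subinterval of $(0,T)$ to vanishing for all $t$.

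The step I expect to be the main obstacle is the careful bookkeeping of the boundary terms at $x=0$ in the Carleman computation: because the weight and the solution both interact with the inverse-square singularity there, one must verify that the transmission conditions of the self-adjoint extension make these boundary contributions either vanish or carry a favourable sign, and that the Hardy inequality is invoked with the correct constant to dominate the potential term uniformly up to the singular endpoint. Getting the sharp dependence on $c$ right near the critical value $-\frac14$, where the Hardy constant degenerates, is the delicate point; everywhere else the argument is a routine adaptation of the classical Fursikov--Imanuvilov Carleman machinery.
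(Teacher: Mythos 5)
Your overall skeleton (duality, unique continuation for the self-adjoint adjoint problem, work on each component of $(-1,1)\setminus\{0\}$ separately, cross the singularity via the transmission conditions) matches the paper, but the proposal has a genuine gap at the step that carries the whole proof: the component that does \emph{not} meet $\omega$. You dispose of it with ``a symmetric Carleman estimate (or the analytic structure of the problem in time) forces $g\equiv 0$ there as well'', but on that component there is no observation region and no observed boundary trace, so a Carleman estimate of the usual form (with an interior or boundary observation term on the right-hand side) concludes nothing, and analyticity in time is irrelevant since $g$ is not known to vanish anywhere on that side. The mechanism in the paper is precise: vanishing of $g$ on the component containing $\omega$ forces $c_1^-=c_2^-=0$, the invertibility of the transmission matrix (\ref{cond_transmission_matricielle}) then forces $c_1^+=c_2^+=0$, so on the far component $g$ coincides with its regular part and hence satisfies the \emph{overdetermined} Cauchy data $g(t,0)=\partial_x g(t,0)=0$. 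It is exactly this extra Neumann condition at $x=0$ that makes possible a Carleman inequality with no observation term at all (Proposition~\ref{prop_Carleman}, whose right-hand side is only the equation residual); the paper stresses in Remark~\ref{rk_poids_Carleman2} that the estimate is false without it. Your plan never identifies what data is transmitted or how it enters the Carleman computation, which is the missing idea. (Incidentally, on the component that does contain $\omega$ no Carleman estimate is needed: the operator is uniformly parabolic on $(-1,-\varepsilon)$, so classical unique continuation \`a la Saut--Scheurer applies there and one lets $\varepsilon\to 0$, as in Proposition~\ref{prop_uc_1er_cote}.)

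A second, related problem is your treatment of the singular potential. You propose a bounded monotone spatial weight and absorption of $\frac{c}{x^2}$ via the Hardy inequality (\ref{Hardy_1d_(0,1)}) with constant $4$. In the paper the weight must be increasing in $x$ (so that the boundary term $R\int_0^T\sigma_x(t,1)z_x^2(t,1)\,\md t$ has a favourable sign without any observation at $x=1$) and is taken of the form $\sigma=\theta(t)x^b$ with $b\in(0,1)$; the resulting term $\sigma_x/x^3\sim x^{b-4}$ is far too singular for (\ref{Hardy_1d_(0,1)}). One needs the sharper weighted Hardy inequality of Proposition~\ref{Prop:Hardy}, valid for exponents $\alpha$ down to $-2$ precisely because $z'(0)=0$, and for $\nu\in(\frac12,1)$ the exponent $b=2-2\nu$ is tuned so that the singular contributions cancel exactly. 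So the assertion that the coercivity furnished by the constant-$4$ Hardy inequality suffices ``everywhere else the argument is routine'' underestimates the difficulty: without the refined inequality and the specific choice of $b$, the Carleman computation does not close on the upper part of the range $c\in(-\frac14,\frac34)$.
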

The null controllability issue for the \textsc{1d} heat equation with such an internal inverse square singularity remains an open question. Like (\ref{grushin_2d}), it has to be noticed that the solutions of (\ref{chaleur_1d}) are related to the semigroup generated by a suitable extension of the Laplace operator with a singular potential.

\subsection{Structure of this article}

Due to the internal singularity and the fact that the considered operators admit several self-adjoint extensions, the functional setting and the well posedness are crucial issues in this article. Section \ref{sect_bien_pose} is dedicated to these questions.

Section~\ref{sect_continuation_unique} is dedicated to the study of the unique continuation property for the adjoint system. Using decomposition in Fourier series in the $y$ variable and unique continuation for uniformly parabolic operator we reduce the problem to the study of a \textsc{1d} singular problem with a boundary inverse square potential. Then we conclude proving a suitable Carleman inequality using an adapted Hardy's inequality.

We end this introduction by a brief review of previous results concerning degenerate and/or singular parabolic equations.

\subsection{A review of previous results}

\noindent
The first result for a heat equation with an inverse square potential $\frac{c}{\|x\|^2}$ deals with well posedness issues. In \cite{BarasGoldstein84}, Baras and Goldstein proved complete instantaneous blow-up for positive initial conditions in space dimension $N$ if $c < c^*(N) :=-\frac{(N-2)^2}{4}$. This critical value is the best constant in Hardy's inequality. Cabr\'e and Martel also studied in \cite{CabreMartel99}  the relation between blow-up of such equations and the existence of an Hardy inequality.
Thus, most of the following studies focus on the range of constants $c \geq c^*(N)$. In this case, well posedness in $L^2(\Omega)$ has been proved in \cite{VazquezZuazua00} by Vazquez and Zuazua. Notice that in those cases the singular set is the point $\{ 0 \}$ (the singularity being at the boundary in the one dimensional case) whereas in this article the singular set is a line separating the \textsc{2d} domain in two connected components.

\noindent
The controllability issues were first studied for degenerate equations. In~\cite{CannarsaMartinezVancostenoble05,MartinezVancostenoble06, CannarsaMartinezVancostenoble08, CannarsaMartinezVancostenoble09}, Cannarsa, Martinez and Vancostenoble proved null controllability with a distributed control for a one dimensional parabolic equation degenerating at the boundary. 
Then, they extended this result to more general degeneracies and in dimension two. These results are based on suitable Hardy inequalities and Carleman estimates. 
More recently, Cannarsa, Tort and Yamamoto~\cite{CannarsaTortYamamoto12} proved approximate controllability for this one dimensional equation degenerating at the boundary with a Dirichlet control on the degenerate boundary. 
Then, Gueye~\cite{Gueye13} proved null controllability for  the same model. Its proof relies on transmutation and appropriate nonharmonic Fourier series.

\noindent
Meanwhile, these Carleman estimates were adapted for heat equation with an inverse square potential $\frac{c}{\|x\|^2}$ in dimension $N \geq 3$. In \cite{VancostenobleZuazua}, Vancostenoble and Zuazua proved null controllability in the case where the control domain $\omega$ contains an annulus centred on the singularity. Their proof relies on a decomposition in spherical harmonics reducing the problem to the study of a \textsc{1d} heat equation with an inverse square potential which is singular at the boundary. The geometric assumptions on the control domain were then removed by Ervedoza in \cite{Ervedoza08} using a direct Carleman strategy in dimension $N \geq 3$. Notice that although these results deal with internal singularity they cannot be adapted to our setting. Indeed, in \cite{VancostenobleZuazua} it is crucial that the singularity of the \textsc{1d} problem obtained by decomposition in spherical harmonics is at the boundary. The Carleman strategy developed in \cite{Ervedoza08} cannot be adapted in this article because our singularity is no longer a point but separates the domain in two connected components.

\noindent 
For null controllability for a one dimensional parabolic equation both degenerate and singular at the boundary we refer to \cite{Vancostenoble11} by Vancostenoble. The proof extends the previous Carleman strategy together with an improved Hardy inequality.

\noindent
As the functional setting for this study is obtained through the design of a suitable self-adjoint extension of our Grushin-like operator, we mention the work \cite{BoscainPrandi13} conducted simultaneously to this study. In this paper, Boscain and Prandi studied some extensions of the Laplace-Beltrami operator (\ref{laplace-beltrami}) for $\gamma \in \R$. Among other things, they designed for a suitable range of constants an extension called bridging extension that allows full communication through the singular set. Even if the models under consideration are not exactly the same, the approximate controllability from one side of the singularity given by Theorem~\ref{th_controle_approche_2d} is in agreement with the existence of this bridging extension.

\section{Well posedness}
\label{sect_bien_pose}

The previous results of the literature dealing with an inverse square potential were obtained thanks to some Hardy-type inequality. 
For a boundary inverse square singularity (as in \cite{Vancostenoble11}), the condition $z(0)=0$ needed for (\ref{Hardy_1d_(0,1)}) to hold is contained in the homogeneous Dirichlet boundary conditions considered. Thus, in \cite{Vancostenoble11}, the appropriate functional setting to study the \textsc{1d} operator $-\partial^2_{xx} + \frac{c}{x^2}$ with $c > -\frac{1}{4}$ is
\begin{equation*}
\left\{ f \in H^2_{loc}((0,1]) \cap H^1_0(0,1) \, ; \, -\partial^2_{xx} f  + \frac{c}{x^2} f \in L^2(0,1) \right\}.
\end{equation*}
For an internal inverse square singularity one still has
\begin{equation} \label{Hardy_1d_(-1,1)}
\int_{-1}^1 \frac{z(x)^2}{x^2} \md x \leq 4 \int_{-1}^1 z_x(x)^2 \md x, \quad \forall z \in H^1(-1,1) \text{ such that } z(0)=0.
\end{equation}
This inequality ceases to be true if $z(0) \neq 0$. Thus, the functional setting must contain some informations on the behaviour of the functions at the singularity.

In this section, we design a suitable self-adjoint extension of the operator $-\partial^2_{xx} -|x|^{2 \gamma} \partial^2_{yy} + \frac{c}{x^2}$ on $C^\infty_0(\Omega\backslash \{x=0\})$. 
The next subsection deals with an associated one dimensional equation. Section~\ref{subsect_bien_pose_2d} will then relate this one dimensional problem to the original problem in dimension two.
In all what follows, the coefficient of the singular potential will be parametrized in the form $c = c_\nu$ where
\begin{equation} \label{def_c_nu}
c_\nu := \nu^2 - \frac{1}{4}, \quad \text{for } \nu \in (0,1).
\end{equation}

\subsection{Introduction of the \textsc{1d} operator}

For $n \in \N^*$, $\gamma > 0$ and $\nu \in (0,1)$ we consider the following homogeneous problem
\begin{equation} \label{grushin_1d}
\left\{
\begin{aligned}
& \partial_t f - \partial^2_{xx} f + \frac{c_\nu}{x^2} f + (n \pi)^2 |x|^{2 \gamma} f = 0,  &(t,x)& \in (0,T) \times (-1,1),
\\
& f(t,-1) = f(t,1) = 0,    &t& \in (0,T).
\end{aligned}
\right.
\end{equation}
This equation is formally the homogeneous equation satisfied by the coefficients of the Fourier expansion in the $y$ variable done in \cite{BeauchardCannarsaGuglielmi} and will be linked to (\ref{grushin_2d}) in Sect.~\ref{subsect_bien_pose_2d}. From now on, we focus on the well posedness of (\ref{grushin_1d}).

\begin{rmq}
A naive functional setting for this equation is the adaptation of~\cite{Vancostenoble11}
\begin{align*}
\Big\{&  f \in L^2(-1,1) \, ; \,f_{|[0,1]} \in H^2_{loc}((0,1]) \cap H^1_0(0,1), 
\\ 
&f_{|[-1,0]} \in H^2_{loc}([-1,0)) \cap H^1_0(-1,0) \text{ and }  -\partial^2_{xx} f + \frac{c_\nu}{x^2} f \in L^2(-1,1) \Big\}.
\end{align*}
However, a functional setting where the two problems on $(-1,0)$ and $(0,1)$ are well posed is not pertinent for the control problem from one side of the singularity. It leads to decoupled dynamics on the connected components of $(-1,0) \cup (0,1)$.
\end{rmq}

\smallskip
We study the differential operator 
\begin{equation*}
A_n f(x) := - \partial^2_{xx} f(x) + \frac{c_\nu}{x^2} f(x) + (n \pi)^2 |x|^{2 \gamma} f(x).
\end{equation*} 
As $\nu \in (0,1)$, the results of \cite{BoscainLaurent} imply that $A_n$ defined on $C_0^\infty ( (-1,0) \cup (0,1))$ admits several self-adjoint extensions. We here specify the self-adjoint extension that will be used. Let
\begin{equation*}
\tilde{H}^2_0(-1,1) := \left\{ f \in H^2(-1,1) \, ; \, f(0)=f'(0)=0 \right\},
\end{equation*}
and
\begin{align*}
\FF_s := \Big\{ f \in L^2(-1,1) \, ; \, &f= c_1^+ |x|^{\nu+\frac{1}{2}} + c_2^+ |x|^{-\nu+\frac{1}{2}} \text{ on } (0,1) 
\\
\text{ and } &f=c_1^- |x|^{\nu+\frac{1}{2}} + c_2^- |x|^{-\nu+\frac{1}{2}} \text{ on } (-1,0) \Big\}.
\end{align*}
Notice that for any  $f_s \in \FF_s$,
\begin{equation} \label{pb_sturm_fs}
\left(-\partial^2_{xx}  + \frac{c_\nu}{x^2} \right) f_s(x) = 0, \quad \forall x \in (-1,0) \cup (0,1).
\end{equation}
The parametrization (\ref{def_c_nu}) of the coefficient of the singular potential by $\nu$ allows to write easily the functions of $\FF_s$.

\medskip
The domain of the operator is defined by
\begin{align*} 
D(A_n) := \Big\{& f=f_r + f_s  \,; \, f_r \in \tilde{H}^2_0(-1,1), \, f_s \in \FF_s \text{ such that } f(-1)=f(1)=0, 
\\
& c_1^- + c_2^- + c_1^+ + c_2^+ = 0 
\text{ and }
\\
&(\nu+\frac{1}{2}) c_1^- + (-\nu+\frac{1}{2}) c_2^- = (\nu+\frac{1}{2}) c_1^+ + (-\nu+\frac{1}{2}) c_2^+ \Big\}.
\label{def_D(A)}
\tag{\theequation} \addtocounter{equation}{1}
\end{align*}
Notice that for $\nu \in (0,1)$, $D(A_n) \subset L^2(-1,1)$. In the following, this unique decomposition of functions of $D(A_n)$ will be referred to as the regular part for $f_r$ and the singular part for $f_s$.
As this domain is independent of $n$, it will be denoted by $D(A)$ in the rest of this article.
The coefficients of the singular part will be denoted by $c_1^+$ if there is no ambiguity and $c_1^+(f)$ otherwise. The conditions imposed on these coefficients in (\ref{def_D(A)}) will be referred to as the \emph{transmission conditions}. These conditions are discussed in Remark~\ref{rk_cond_transmission}, their role and origin are discussed in Sect.~\ref{subsect_bien_pose_1d} and~\ref{subsect_extensions}.

This operator satisfies the following properties
\begin{prop} \label{prop_auto_adjoint}
For any $n \in \N^*$ and $\nu \in (0,1)$, the operator $(A_n,D(A))$ is self-adjoint on $L^2(-1,1)$. Moreover, for any $f \in D(A)$,
\begin{equation} \label{coercivite_1d}
\lag A_n f , f \rag \geq m_\nu \int_{-1}^1 \partial_x f_r (x)^2 \md x + (n \pi)^2 \int_{-1}^1 |x|^{2 \gamma} f(x)^2 \md x,
\end{equation}
where $m_\nu := \min \{ 1, 4 \nu^2 \}$.
\end{prop}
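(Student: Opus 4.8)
The plan is to show symmetry through the Lagrange (Green) identity, upgrade it to self-adjointness by a deficiency-index argument, and then obtain the coercivity estimate (\ref{coercivite_1d}) by a careful integration by parts across the singularity combined with the Hardy inequality (\ref{Hardy_1d_(-1,1)}). For $u,v\in D(A)$ I would start by multiplying $A_n u$ by $v$, integrating over $(-1,-\varepsilon)\cup(\varepsilon,1)$ and integrating by parts twice. The potential terms $c_\nu/x^2$ and $(n\pi)^2|x|^{2\gamma}$, being real multiplications, drop out, leaving only the Wronskian brackets $[u,v](x):=u(x)v'(x)-u'(x)v(x)$ at the four endpoints. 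At $x=\pm1$ these vanish by the Dirichlet conditions $u(\pm1)=v(\pm1)=0$. At $x=0^\pm$ I would use $u=u_r+u_s$: since $u_r\in\tilde H^2_0(-1,1)$ satisfies $u_r=O(|x|^{3/2})$, $u_r'=O(|x|^{1/2})$ while $u_s=O(|x|^{1/2-\nu})$, $u_s'=O(|x|^{-1/2-\nu})$, every bracket containing a regular factor is $O(|x|^{1-\nu})\to0$ (this is where $\nu<1$ is essential), so only $[u_s,v_s]$ survives. Computing the (constant) Wronskians of $|x|^{\nu+\frac12}$ and $|x|^{-\nu+\frac12}$ then gives
\begin{equation*}
\lag A_n u,v\rag-\lag u,A_n v\rag = 2\nu\big(c_1^+d_2^+-c_2^+d_1^++c_1^-d_2^--c_2^-d_1^-\big),
\end{equation*}
where $c_i^\pm,d_i^\pm$ denote the singular coefficients of $u,v$. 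Inserting the two transmission conditions of (\ref{def_D(A)}) for both $u$ and $v$ makes this skew form vanish identically, so $A_n$ is symmetric.

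For self-adjointness I would invoke Weyl--Sturm theory. Since $\nu\in(0,1)$, both indicial solutions $|x|^{\pm\nu+\frac12}$ are square integrable near $0$, so $A_n$ is in the limit circle case at $0^+$ and $0^-$, while $\pm1$ are regular endpoints; hence the minimal operator on $C^\infty_0\big((-1,0)\cup(0,1)\big)$ has deficiency indices $(4,4)$ and its boundary-data space is eight dimensional. The domain $D(A)$ is cut out by exactly four boundary conditions (two Dirichlet and the two transmission conditions), defining a four-dimensional subspace of boundary data which the computation above shows to be isotropic for the symplectic boundary form; being of half the dimension, it is Lagrangian. By the Glazman--Krein--Naimark theorem, $A_n$ is therefore self-adjoint. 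Equivalently, one checks directly that any $g\in D(A_n^*)$ lies in the maximal domain, hence decomposes as $g=g_r+g_s$ with $g_s\in\FF_s$, and that the vanishing of the boundary form against all $f\in D(A)$ forces $g$ to satisfy the Dirichlet and transmission conditions, i.e. $g\in D(A)$.

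For (\ref{coercivite_1d}) I would first use (\ref{pb_sturm_fs}) to cancel the singular part in the leading term, writing for $f=f_r+f_s\in D(A)$
\begin{equation*}
A_n f = -f_r'' + \frac{c_\nu}{x^2}f_r + (n\pi)^2|x|^{2\gamma}f,
\end{equation*}
all three summands being in $L^2$. Thus $\lag A_n f,f\rag = I + (n\pi)^2\int_{-1}^1|x|^{2\gamma}f^2\,\md x$ with $I:=\int_{-1}^1\big(-f_r''+\tfrac{c_\nu}{x^2}f_r\big)f\,\md x$, the second term already matching the one in (\ref{coercivite_1d}). I would compute $I$ by splitting $f=f_r+f_s$ and integrating by parts on $(-1,-\varepsilon)\cup(\varepsilon,1)$, repeatedly using $\tfrac{c_\nu}{x^2}f_s=f_s''$ from (\ref{pb_sturm_fs}). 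The brackets produced at $\pm\varepsilon$ are again $O(\varepsilon^{1-\nu})\to0$, while those at $\pm1$ combine, after using $f(\pm1)=0$, into $-f_s(1)f_s'(1)+f_s(-1)f_s'(-1)$. Setting $V^\pm:=c_1^\pm+c_2^\pm=f_s(\pm1)$ and $W^\pm:=(\nu+\tfrac12)c_1^\pm+(\tfrac12-\nu)c_2^\pm$, the transmission conditions read precisely $V^++V^-=0$ and $W^+=W^-$, so a short computation shows this outer contribution equals $-(V^+W^++V^-W^-)=0$. Hence $I=\int_{-1}^1\big((f_r')^2+\tfrac{c_\nu}{x^2}f_r^2\big)\,\md x$.

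Finally, as $f_r(0)=0$, the Hardy inequality (\ref{Hardy_1d_(-1,1)}) applies to $f_r$. If $c_\nu\ge0$ (that is $\nu\ge\tfrac12$) the potential term is nonnegative and $I\ge\int_{-1}^1(f_r')^2\,\md x$; if $c_\nu<0$ one estimates $\int_{-1}^1\tfrac{c_\nu}{x^2}f_r^2\,\md x\ge 4c_\nu\int_{-1}^1(f_r')^2\,\md x$, whence $I\ge(1+4c_\nu)\int_{-1}^1(f_r')^2\,\md x=4\nu^2\int_{-1}^1(f_r')^2\,\md x$. In both cases $I\ge m_\nu\int_{-1}^1(f_r')^2\,\md x$ with $m_\nu=\min\{1,4\nu^2\}$, giving (\ref{coercivite_1d}). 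The main obstacle throughout is the bookkeeping of the improper boundary terms at the internal singularity: one must justify the integrations by parts as $\varepsilon\to0$ from the precise decay of $f_r$ and the limit circle ($L^2$) nature of $f_s$, and — the genuinely informative point — recognize that the transmission conditions of (\ref{def_D(A)}) are exactly what simultaneously renders the singular boundary form isotropic (yielding self-adjointness) and cancels the outer boundary terms (yielding coercivity).
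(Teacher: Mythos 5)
Your proof is correct and follows essentially the same route as the paper: symmetry via the Lagrange/Wronskian identity combined with the $o(|x|^{3/2})$, $o(|x|^{1/2})$ decay of the regular part at the singularity, coercivity via the same integration by parts in which the transmission conditions $V^++V^-=0$, $W^+=W^-$ cancel the outer boundary terms, and the same Hardy-inequality step giving $m_\nu=\min\{1,4\nu^2\}$. The only deviation is that you conclude self-adjointness by the limit-circle/GKN argument (a Lagrangian subspace of the eight-dimensional boundary-data space), whereas the paper carries out the direct verification $D(A_0^*)\subset D(A)$ by testing against explicit elements of $D(A)$; this is a cosmetic difference, since the paper itself notes in Section~\ref{subsect_extensions} that its construction is an instance of Zettl's theorem, and you also sketch the direct check as an alternative.
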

Before proving this proposition in Sect.~\ref{subsect_bien_pose_1d}, we give some comments on this construction of the $1$\textsc{d} operator.

\begin{rmq}
As noticed in (\ref{pb_sturm_fs}), the functions of $\FF_s$ are chosen in the kernel of the singular differential operator $-\partial^2_{xx} + \frac{c_\nu}{x^2}$. Thus, for any $f \in D(A)$,
\begin{equation*}
A_n f = \left(-\partial^2_{xx} f_r + \frac{c_\nu}{x^2} f_r \right) + (n \pi)^2 |x|^{2 \gamma} f.
\end{equation*}
As done in~\cite[Proposition 3.1]{AlekseevaAnanieva}, for any $f_r \in \tilde{H}^2_0(-1,1)$, writing
\begin{equation*}
f_r(x) = \int_0^x (x-s) f_r''(s) \md s,
\end{equation*}
and applying Minkowski's integral inequality we get that the map $x \mapsto \frac{1}{x^2} f_r(x)$ belongs to  $L^2(-1,1)$. Thus, $(A_n, D(A))$ is indeed an operator in $L^2(-1,1)$.
\end{rmq}

\begin{rmq} \label{rk_cond_transmission}
The reason for imposing these particular transmission conditions is threefold. First, it implies the self-adjointness of the operator under consideration. This will be pointed out in the proof of Proposition~\ref{prop_auto_adjoint}. 
This choice is guided by the general theory of self-adjoint extensions from the point of view of boundary conditions as detailed by Zettl~\cite[Theorem 13.3.1, Case 5]{ZettlBook}. For the sake of clarity, the proof of self-adjointness is done independently of this general theory in Sect.~\ref{subsect_bien_pose_1d}. A discussion relating this general theory and the domain (\ref{def_D(A)}) together with other possible choices is done in Sect.~\ref{subsect_extensions}.

The second interest of these transmission conditions is to ensure the positivity of the operator, as detailed in the proof of Proposition~\ref{prop_bien_pose_1d}.

Finally, these transmission conditions are really \emph{transmission} conditions in the sense that they allow some information to cross the singularity. In matrix form, the transmission conditions can be rewritten as
\begin{equation}  \label{cond_transmission_matricielle}
\begin{pmatrix} c_1^+(f) \\ c_2^+(f) \end{pmatrix} =
\frac{-1}{2 \nu} \begin{pmatrix} -1 & 2 \nu -1   \\  2\nu+1 & 1  \end{pmatrix}
\begin{pmatrix} c_1^-(f) \\ c_2^-(f) \end{pmatrix}, \quad \forall f \in D(A).
\end{equation}
Thus, the invertibility of the above matrix implies that if the singular part of some function $f \in D(A)$  identically vanishes on one side of the singularity it also vanishes on the other side. This is a crucial point for the proof of approximate controllability.
\end{rmq}

\medskip
\noindent
Using Proposition \ref{prop_auto_adjoint}, the well posedness of the one dimensional system (\ref{grushin_1d}) follows from Proposition~\ref{prop_auto_adjoint} and the Hille-Yosida theorem (see e.g. \cite[Theorem 3.2.1]{CazenaveHarauxBook}).
\begin{prop} \label{prop_bien_pose_1d}
For any $n \in \N^*$ and any $f^0 \in L^2(-1,1)$, problem (\ref{grushin_1d}) with initial condition $f(0,\cdot)=f^0$  has a unique solution 
\begin{equation*}
f \in C^0([0,+\infty), L^2(-1,1)) \cap C^0((0,+\infty),D(A)) \cap C^1((0,+\infty),L^2(-1,1)).
\end{equation*}
This solution satisfies
\begin{equation*}
||f(t)||_{L^2(-1,1)} \leq ||f^0||_{L^2(-1,1)}.
\end{equation*}
\end{prop}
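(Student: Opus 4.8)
The plan is to read (\ref{grushin_1d}) with $f(0,\cdot)=f^0$ as the autonomous abstract Cauchy problem $f'(t) + A_n f(t) = 0$ on $L^2(-1,1)$, and to deduce well posedness from the Hille--Yosida theorem applied to $-A_n$. Essentially all the work has already been done in Proposition~\ref{prop_auto_adjoint}: it remains to convert self-adjointness together with the coercivity bound into the generation of a contraction semigroup.

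First I would extract two facts from Proposition~\ref{prop_auto_adjoint}. The operator $(A_n, D(A))$ is self-adjoint on $L^2(-1,1)$, and the estimate (\ref{coercivite_1d}) yields $\lag A_n f, f \rag \geq 0$ for every $f \in D(A)$. Hence $A_n$ is a nonnegative self-adjoint operator; equivalently $-A_n$ is dissipative. For maximality I would use that the spectrum of a nonnegative self-adjoint operator is contained in $[0,+\infty)$, so that $-1 \in \rho(A_n)$ and $\mathrm{Id} + A_n : D(A) \to L^2(-1,1)$ is a bijection. Thus $-A_n$ is $m$-dissipative, and by \cite[Theorem 3.2.1]{CazenaveHarauxBook} it generates a strongly continuous semigroup of contractions $(S_n(t))_{t \geq 0}$ on $L^2(-1,1)$.

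Setting $f(t) := S_n(t) f^0$ then produces the unique solution in $C^0([0,+\infty), L^2(-1,1))$, and the contraction property is exactly the asserted bound $\|f(t)\|_{L^2(-1,1)} \leq \|f^0\|_{L^2(-1,1)}$. To obtain the smoothing regularity $C^0((0,+\infty), D(A)) \cap C^1((0,+\infty), L^2(-1,1))$ for merely $L^2$ data, I would invoke the analyticity of the semigroup generated by a nonnegative self-adjoint operator. Concretely, via the spectral theorem $S_n(t) = \int_{[0,+\infty)} e^{-t\lambda} \, \md E_\lambda$, and since $\lambda \mapsto \lambda^k e^{-t\lambda}$ is bounded on $[0,+\infty)$ for each fixed $t > 0$, the vector $f(t)$ lies in $\bigcap_k D(A_n^k)$; in particular $f(t) \in D(A)$, and $t \mapsto f(t)$ is $C^\infty$ from $(0,+\infty)$ into $L^2(-1,1)$ with $f'(t) = -A_n f(t)$.

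There is no genuine obstacle once Proposition~\ref{prop_auto_adjoint} is granted: the self-adjointness and nonnegativity established there are precisely the hypotheses of the contraction and analytic semigroup generation theorems. The only point worth flagging is that the gain of regularity $f(t) \in D(A)$ holds only for $t>0$ --- the characteristic regularizing effect of an analytic semigroup --- which is why $D(A)$-valued continuity is claimed on $(0,+\infty)$ rather than on $[0,+\infty)$.
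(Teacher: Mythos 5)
Your proposal is correct and follows exactly the route the paper takes: the paper's proof of this proposition consists precisely of combining the self-adjointness and nonnegativity established in Proposition~\ref{prop_auto_adjoint} with the Hille--Yosida theorem as stated in \cite[Theorem 3.2.1]{CazenaveHarauxBook}, the smoothing regularity on $(0,+\infty)$ being the standard consequence for semigroups generated by nonnegative self-adjoint operators. You have merely made explicit the $m$-dissipativity and spectral-theorem details that the paper leaves to the cited reference.
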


\noindent
In all what follows, we denote by $e^{-A_n t}$ the semigroup generated by $-A_n$ i.e. for any $f^0 \in L^2(-1,1)$, the function $t \mapsto e^{-A_n t } f^0$ is the solution of (\ref{grushin_1d}) given by Proposition~\ref{prop_bien_pose_1d}.

\subsection{Well posedness of the \textsc{1d} problem}
\label{subsect_bien_pose_1d}

This subsection is dedicated to the proof of Proposition~\ref{prop_auto_adjoint}. The proof uses the following two lemmas.

The following lemma is proved in~\cite[Lemma 9.2.3]{ZettlBook}.
\begin{lemme} \label{lemme_forme_Lagrange}
For $f,g \in \tilde{H}^2_0(-1,1) \oplus \FF_s$, if we define
\begin{equation*}
[f,g] (x) := (f g' - f' g)(x), \quad \forall x \neq 0,
\end{equation*}
then
\begin{align*}
\int_{-1}^1 \left(-\partial^2_{xx} f + \frac{c_\nu}{x^2} f \right)(x) g(x) \md x 
&= \int_{-1}^1 f(x) \left(-\partial^2_{xx} g + \frac{c_\nu}{x^2} g \right)(x) \md x 
\\
&+ [f,g](1) - [f,g](0^+) + [f,g](0^-) - [f,g](-1).
\end{align*}
\end{lemme}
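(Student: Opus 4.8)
The plan is to prove this as a Lagrange (Green) formula for the Sturm--Liouville operator $L := -\partial^2_{xx} + \frac{c_\nu}{x^2}$, obtained by integrating by parts twice on each side of the singularity while excising a neighbourhood of the origin. First I would observe that the two integrals in the statement are absolutely convergent: for $f = f_r + f_s \in \tilde H^2_0(-1,1) \oplus \FF_s$, equation (\ref{pb_sturm_fs}) gives $L f_s = 0$ on $(-1,0) \cup (0,1)$, so $Lf = -\partial^2_{xx} f_r + \frac{c_\nu}{x^2} f_r$; by the remark following Proposition~\ref{prop_auto_adjoint} the map $x \mapsto \frac{1}{x^2} f_r$ lies in $L^2(-1,1)$, whence $Lf \in L^2(-1,1)$ and $(Lf)\,g \in L^1(-1,1)$. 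Then I would fix $\varepsilon > 0$, split $\int_{-1}^1 = \int_{-1}^{-\varepsilon} + \int_{\varepsilon}^1 + o(1)$, and integrate by parts twice on $(\varepsilon,1)$ and on $(-1,-\varepsilon)$, which is licit since $f$ and $g$ are $H^2$ away from $0$. The potential terms $\frac{c_\nu}{x^2} f g$ are symmetric and cancel between the two sides, leaving only the bracket $[f,g] = fg' - f'g$ evaluated at the endpoints $\pm 1$ and $\pm\varepsilon$.

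The crux is to show that the boundary terms $[f,g](\pm\varepsilon)$ converge, as $\varepsilon \to 0$, to finite one-sided limits $[f,g](0^\pm)$. For this I would use the decomposition $f = f_r + f_s$, $g = g_r + g_s$ and bilinearity of the bracket. The building blocks $|x|^{\nu + 1/2}$ and $|x|^{-\nu + 1/2}$ both solve $Lu = 0$, so their Wronskian is constant (equal to $-2\nu$); hence $[f_s,g_s]$ is constant on each half-interval and its limit at $0^\pm$ is trivially finite. For the mixed terms, the conditions $f_r(0) = f_r'(0) = 0$ with $f_r \in H^2$ give, via $f_r(x) = \int_0^x (x-s) f_r''(s)\,\md s$ and Cauchy--Schwarz, the bounds $f_r(x) = O(|x|^{3/2})$ and $f_r'(x) = O(|x|^{1/2})$ near $0$, while $f_s(x) = O(|x|^{-\nu + 1/2})$ and $f_s'(x) = O(|x|^{-\nu - 1/2})$. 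Since $\nu \in (0,1)$, both products appearing in $[f_r,g_s]$ and $[f_s,g_r]$ are $O(|x|^{1-\nu}) \to 0$. Therefore all four one-sided limits exist and are finite.

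Finally I would pass to the limit $\varepsilon \to 0$: the interior integrals converge by the $L^1$ bound above, the terms at $\pm 1$ are unchanged, and the terms at $\pm\varepsilon$ tend to $[f,g](0^+)$ and $[f,g](0^-)$ with signs dictated by the orientation of the two integration-by-parts computations, so that collecting them reproduces exactly $[f,g](1) - [f,g](0^+) + [f,g](0^-) - [f,g](-1)$. The main obstacle is the singular analysis of the second paragraph: although $f_s, g_s$ and their derivatives blow up at the origin, the antisymmetric combination $fg' - f'g$ must stay bounded, and this rests precisely on the fact that the elements of $\FF_s$ are genuine solutions of $Lu = 0$ (constant Wronskian) together with the vanishing $f_r(0) = f_r'(0) = 0$ built into $\tilde H^2_0(-1,1)$, which is what forces the cross terms to decay. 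Alternatively, one may simply invoke this as the one-dimensional instance of the general Lagrange identity for Sturm--Liouville operators in \cite[Lemma 9.2.3]{ZettlBook}.
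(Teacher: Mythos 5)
Your argument is correct, and it is worth noting that the paper does not actually prove this lemma at all: it simply cites \cite[Lemma 9.2.3]{ZettlBook}, the general Lagrange identity for Sturm--Liouville expressions on a maximal domain, which is exactly the fallback you mention in your last sentence. What you supply instead is the standard direct proof: excise $(-\varepsilon,\varepsilon)$, integrate by parts twice on each half-interval (licit since $f,g\in H^2$ away from $0$ and the potential terms cancel by symmetry), and pass to the limit. Your justification of the two nontrivial points is sound: $Lf=Lf_r\in L^2(-1,1)$ because $Lf_s=0$ by (\ref{pb_sturm_fs}) and $x\mapsto f_r(x)/x^2\in L^2$ by the Minkowski-inequality remark, so the interior integrals converge; and the one-sided limits $[f,g](0^\pm)$ exist because $[f_s,g_s]$ has constant Wronskian on each half-interval (both basis functions solve $Lu=0$, with $[\,|x|^{\nu+1/2},|x|^{-\nu+1/2}]=-2\nu$ on $(0,1)$), while the cross terms are $O(|x|^{1-\nu})\to 0$ by the bounds $f_r=O(|x|^{3/2})$, $f_r'=O(|x|^{1/2})$ --- which is precisely the content of Lemma~\ref{lemme_fr_en_0}, used by the paper only later when it evaluates the brackets at $0^\pm$ in the proof of Proposition~\ref{prop_auto_adjoint}. (You leave $[f_r,g_r]$ implicit, but it vanishes at $0$ for the same reason and trivially so.) In short, your route is more self-contained and makes explicit the singular analysis that the paper delegates to \cite{ZettlBook}; the paper's route buys brevity and direct compatibility with the extension-theory framework of Section~\ref{subsect_extensions}.
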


\noindent
The following lemma characterizes the behaviour of the regular part at the singularity.
\begin{lemme} \label{lemme_fr_en_0}
For any $f \in H^2(0,1)$, satisfying $f(0)=f'(0)=0$,
\begin{equation*}
\lim\limits_{x \to 0} \frac{f(x)}{x^{\frac{3}{2}}} = 0 
\quad \text{ and } \quad
\lim\limits_{x \to 0} \frac{f'(x)}{x^{\frac{1}{2}}} = 0.
\end{equation*}
The same holds for functions in $\tilde{H}^2_0(-1,1)$ and both limits $x \to 0^{\pm}$.
\end{lemme}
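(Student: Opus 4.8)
The plan is to reduce everything to the Taylor representation with integral remainder at the singularity, which is what makes the vanishing conditions $f(0)=f'(0)=0$ useful. Since $f \in H^2(0,1)$, its derivative $f'$ is absolutely continuous on $[0,1]$ and $f'' \in L^2(0,1)$; the fundamental theorem of calculus together with the two vanishing conditions then gives
\begin{equation*}
f'(x) = \int_0^x f''(s) \, \md s
\qquad \text{and} \qquad
f(x) = \int_0^x (x-s) f''(s) \, \md s.
\end{equation*}
The whole point is that these representations express the two quantities we want to control purely in terms of the $L^2$ function $f''$ integrated over the shrinking interval $(0,x)$.

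Next I would estimate each quotient by a direct application of the Cauchy-Schwarz inequality. For the derivative, bounding $\int_0^x 1 \, \md s = x$ yields
\begin{equation*}
|f'(x)| \leq x^{1/2} \left( \int_0^x f''(s)^2 \, \md s \right)^{1/2},
\qquad \text{so} \qquad
\frac{|f'(x)|}{x^{1/2}} \leq \left( \int_0^x f''(s)^2 \, \md s \right)^{1/2}.
\end{equation*}
For the function itself, using $\int_0^x (x-s)^2 \, \md s = x^3/3$ as the first Cauchy-Schwarz factor gives
\begin{equation*}
\frac{|f(x)|}{x^{3/2}} \leq \frac{1}{\sqrt{3}} \left( \int_0^x f''(s)^2 \, \md s \right)^{1/2}.
\end{equation*}
Both quotients are thus controlled by the same tail integral of $f''^2$.

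The conclusion is then immediate: since $f'' \in L^2(0,1)$, absolute continuity of the Lebesgue integral (equivalently, dominated convergence) gives $\int_0^x f''(s)^2 \, \md s \to 0$ as $x \to 0^+$, so both quotients tend to $0$. There is no genuine obstacle in this lemma; the only step that deserves a word of justification is the passage to the integral representation, which relies precisely on $f'$ being absolutely continuous for $f \in H^2$, so that the boundary values $f(0)$ and $f'(0)$ are well defined and the fundamental theorem of calculus applies. For the bilateral statement on $\tilde{H}^2_0(-1,1)$, I would simply run the identical argument separately on $(0,1)$ and on $(-1,0)$, integrating from $0$ outward on each side, which delivers the two one-sided limits $x \to 0^{\pm}$.
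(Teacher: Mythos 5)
Your proposal is correct and follows essentially the same route as the paper: the paper writes $f(x)=\int_0^x\int_0^t f''(s)\,\md s\,\md t$ and applies Cauchy--Schwarz to the inner integral, which is just the Fubini form of your kernel representation $f(x)=\int_0^x (x-s)f''(s)\,\md s$, and both arguments reduce the two quotients to the tail integral $\bigl(\int_0^x f''(s)^2\,\md s\bigr)^{1/2}$, which vanishes by absolute continuity of the Lebesgue integral. The only cosmetic difference is the constant ($1/\sqrt{3}$ versus the paper's $2/3$), which is immaterial.
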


\begin{proof}[Proof of Lemma \ref{lemme_fr_en_0}]
As $f(0) = f'(0) =0$, it comes that
\begin{equation*}
f(x) = \int_0^x \int_0^t f''(s) \md s \md t.
\end{equation*}
Then, Cauchy-Schwarz inequality implies,
\begin{equation*}
|f(x)| \leq  \int_0^x  \sqrt{t} \left( \int_0^t |f''(s)|^2 \md s \right)^{\frac{1}{2}} \md t
\leq \frac{2}{3} \left(\int_0^x |f''(s)|^2 \md s \right)^{\frac{1}{2}} x^{\frac{3}{2}}.
\end{equation*}
The proof of the second limit is similar.

\end{proof}

\noindent
We now turn to the proof of Proposition \ref{prop_auto_adjoint}.
\smallskip

\begin{proof}[Proof of Proposition \ref{prop_auto_adjoint}] 
We start by proving that $(A_n,D(A))$ is a symmetric operator. Thus, $A_n^*$ is an extension of $A_n$ and self-adjointness will follow from the equality $D(A_n^*)=D(A_n)$.

\medskip
\textit{First step :  we prove that $(A_n,D(A))$ is a symmetric operator.}

\noindent
Let $f,g \in D(A)$. As $f(1) =g(1) = f(-1) =g(-1) = 0$, it comes that
\begin{equation*}
[f,g](1) = [f,g](-1) = 0.
\end{equation*}
Lemma \ref{lemme_fr_en_0} implies that
\begin{equation*}
[f,g](0^+) = [f_s,g_s](0^+) = \big( c_1^+(f) c_2^+(g) - c_2^+(f) c_1^+(g) \big) [|x|^{\nu+\frac{1}{2}}, |x|^{-\nu+\frac{1}{2}}](0^+),
\end{equation*}
and
\begin{align*}
[f,g](0^-) &= [f_s,g_s](0^-) 
\\
&= \big( c_1^-(f) c_2^-(g) - c_2^-(f) c_1^-(g) \big) [|x|^{\nu+\frac{1}{2}}, |x|^{-\nu+\frac{1}{2}}](0^-)
\\
&= - \big( c_1^-(f) c_2^-(g) - c_2^-(f) c_1^-(g) \big) [|x|^{\nu+\frac{1}{2}}, |x|^{-\nu+\frac{1}{2}}](0^+).
\end{align*}
Thus, using the matrix formulation~(\ref{cond_transmission_matricielle}) of the transmission conditions, we get that for any $f,g \in D(A)$
\begin{equation*}
c_1^+(f) c_2^+(g) - c_2^+(f) c_1^+(g) = 
- \big( c_1^-(f) c_2^-(g) - c_2^-(f) c_1^-(g) \big).
\end{equation*}
This leads to
\begin{equation*}
[f,g](0^+) = [f,g](0^-).
\end{equation*}
Finally, Lemma~\ref{lemme_forme_Lagrange} implies that for any $f,g \in D(A)$, $\lag A_n f ,g \rag = \lag f, A_n g \rag$.
\smallskip

Thus, to prove self-adjointness it remains to prove that $D(A_n^*)=D(A)$. As $D(A)$ is independent of $n$ and $x \mapsto (n \pi)^2 |x|^{2\gamma} \in L^{\infty}(-1,1)$ it comes that $D(A_n^*)=D(A_0^*)$.

\medskip
\textit{Second step : minimal and maximal domains.} First, we explicit the minimal and maximal domains in the case of a boundary singularity. Without loss of generality, we study the operator on $(0,1)$.

\noindent
Using \cite[Proposition 3.1]{AlekseevaAnanieva}, the minimal and maximal domains associated to the differential expression $A_0$ in $L^2(0,1)$ are respectively equal to
\begin{equation*}
H^2_0([0,1]) := \left\{ y \in H^2([0,1]) \, ; \, y(0)=y(1)=y'(0)=y'(1)=0 \right\}
\end{equation*}
and
\begin{equation*}
\left\{ y \in H^2([0,1]) \, ; \, y(0)=y'(0)=0 \right\} \oplus \text{Span} \left\{ x^{\nu+\frac{1}{2}}, \, x^{-\nu+\frac{1}{2}} \right\}.
\end{equation*}

\noindent
Then, \cite[Lemma 13.3.1]{ZettlBook} imply that the minimal and maximal domains associated to $A_0$ on the interval $(-1,1)$ are given by 
\begin{equation} \label{def_Dmin}
D_{min} := \left\{ f \in \tilde{H}^2_0(-1,1) \, ; \, f(-1)=f(1)=f'(-1)=f'(1)=0 \right\},
\end{equation}
and
\begin{equation} \label{def_Dmax}
D_{max} := \tilde{H}^2_0(-1,1) \oplus \FF_s.
\end{equation}
Besides, the minimal and maximal operators form an adjoint pair.

\medskip
\textit{Third step : self-adjointness.}
The operator $A_0$ being a symmetric extension of the minimal operator it comes that $D(A_0) \subset D(A_0^*) \subset D_{max}$. Let $g \in D(A_0^*)$ be decomposed as $g = g_r + g_s$ with $g_r \in \tilde{H}^2_0(-1,1)$ and $g_s \in \FF_s$. We prove that $g$ satisfy the boundary and transmission conditions. By the definition of $D(A_0^*)$, there exists $c>0$ such that for any $f \in D(A)$,
\begin{equation*}
|\lag A_0 f , g \rag| \leq c ||f||_{L^2}.
\end{equation*}
Let $f \in D(A) \cap \tilde{H}^2_0(-1,1)$ be such that $f \equiv 0$ in $(-1,0)$. Then, Lemma \ref{lemme_forme_Lagrange} implies that
\begin{equation*}
\lag A_0 f ,g \rag = \lag f , A_0 g \rag + [f,g](1) = \lag f, A_0 g \rag + f'(1) g(1).
\end{equation*}
Thus, $g(1) = 0$. Symmetric arguments imply that $g(-1)=0$.

\noindent
We now turn to the transmission conditions. Let $f \in D(A)$ be such that its singular part is given by
\begin{equation*}
c_1^+(f) := \frac{1}{2 \nu}, \quad c_2^+(f) := -\frac{1}{2 \nu}.
\end{equation*}
Then, the transmission conditions imply
\begin{equation*}
c_1^-(f) = \frac{1}{2 \nu}, \quad c_2^-(f) = -\frac{1}{2 \nu}.
\end{equation*}
By Lemma \ref{lemme_forme_Lagrange}
\begin{equation*}
\lag A_0 f ,g \rag = \lag f, A_0 g \rag + [f,g](0^-) - [f,g](0^+).
\end{equation*}
Using Lemma \ref{lemme_fr_en_0} it comes that the regular parts have no contribution at $0$ i.e. $[f,g](0^-) =[f_s,g_s](0^-)$ and $[f,g](0^+) = [f_s,g_s](0^+)$. Straightforward computations lead to
\begin{equation*}
[f,g](0^+) = - c_1^+(g) - c_2^+(g), \quad [f,g](0^-) = c_1^-(g) + c_2^-(g).
\end{equation*}
We thus recover the first transmission condition. The second transmission condition follows from similar computations with the choice of a particular $f \in D(A)$ satisfying
\begin{equation*}
c_1^+(f) := - \frac{\nu-\frac{1}{2}}{2 \nu}, \quad c_2^+(f) := - \frac{\nu + \frac{1}{2}}{2 \nu}.
\end{equation*}
Thus, $D(A_0^*) \subset D(A)$. This proves that $(A_n,D(A))$ is a self-adjoint operator.

\medskip
\textit{Fourth step : positivity.} We end the proof of Proposition \ref{prop_auto_adjoint} by proving (\ref{coercivite_1d}). Let $f \in D(A)$.

\noindent
Using Lemma \ref{lemme_forme_Lagrange} and integration by parts it comes that
\begin{align*}
&\lag A_n f, f \rag = \int_{-1}^1 \Big(-\partial^2_{xx} f_r + \frac{c_\nu}{x^2} f_r \Big)(x) f(x) \md x
+ \int_{-1}^1 (n \pi)^2 |x|^{2 \gamma} f^2(x) \md x,
\\
&= \int_{-1}^1 (\partial_x f_r)^2(x) + \frac{c_\nu}{x^2} f_r^2(x) \md x + \int_{-1}^1 (n \pi)^2 |x|^{2 \gamma} f^2(x) \md x + (-\partial_x f_r)(1) f_r(1)
\\
& + \partial_x f_r(-1) f_r(-1) + [f_r, f_s](1) - [f_r, f_s](0^+) + [f_r,f_s](0^-) - [f_r,f_s](-1).
\end{align*}
Using Lemma \ref{lemme_fr_en_0}, it comes that $[f_r,f_s](0^+)=[f_r,f_s](0^-)=0$. Gathering the boundary terms and using $f(1)=f(-1)=0$ it comes that
\begin{align*}
\lag A_n f , f\rag &= \int_{-1}^1 (\partial_x f_r)^2(x) + \frac{c_\nu}{x^2} f_r^2(x) \md x + \int_{-1}^1 (n \pi)^2 |x|^{2 \gamma} f^2(x) \md x
\\
& + f_r(1) \partial_x f_s (1) - f_r(-1) \partial_x f_s(-1).
\label{positivite_inter}
\tag{\theequation} \addtocounter{equation}{1}
\end{align*}
As $f(1)=f(-1)=0$, it comes that
\begin{gather*}
f_r(1) \partial_x f_s (1) = - \big( c_1^+(f) + c_2^+(f) \big) \left( \left(\nu+ \frac{1}{2}\right) c_1^+(f) + \left(-\nu + \frac{1}{2} \right) c_2^+(f) \right),
\\
f_r(-1) \partial_x f_s (-1) =  \big( c_1^-(f) + c_2^-(f) \big) \left( \left(\nu+ \frac{1}{2}\right) c_1^-(f) + \left(-\nu + \frac{1}{2} \right) c_2^-(f) \right).
\end{gather*}
Thus, a sufficient condition to ensure that $A_n$ is non-negative is
\begin{align*} 
&\big( c_1^-(f) + c_2^-(f) \big) \left( \left(\nu+ \frac{1}{2}\right) c_1^-(f) + \left(-\nu + \frac{1}{2} \right) c_2^-(f) \right)  
\\
&=  - \big( c_1^+(f) + c_2^+(f) \big) \left( \left(\nu+ \frac{1}{2}\right) c_1^+(f) + \left(-\nu + \frac{1}{2} \right) c_2^+(f) \right).
\label{CS_positivite}
\tag{\theequation} \addtocounter{equation}{1}
\end{align*}
This follows directly from the transmission conditions.
Thus, (\ref{positivite_inter}) implies
\begin{equation} \label{positivite_A}
\lag Af , f\rag \geq  \int_{-1}^1 (\partial_x f_r)^2(x) + \frac{c_\nu}{x^2} f_r^2(x) \md x + \int_{-1}^1 (n \pi)^2 |x|^{2 \gamma} f^2(x) \md x.
\end{equation}
If $c_\nu \geq 0$, we get (\ref{coercivite_1d}) with $m_\nu =1$. If $c_\nu < 0$, using Hardy's inequality (\ref{Hardy_1d_(-1,1)}), it comes that
\begin{align*}
&\int_{-1}^1 (\partial_x f_r)^2(x) + \frac{c_\nu}{x^2} f_r^2(x) \md x 
\\
&= \left( 1 + 4 c_\nu \right) \int_{-1}^1 (\partial_x f_r)^2(x) \md x - 4 c_\nu \int_{-1}^1 \left( (\partial_x f_r)^2(x) - \frac{1}{4} \frac{f_r^2(x)}{x^2} \right) \md x 
\\
&\geq \left( 1 + 4 c_\nu \right) \int_{-1}^1 (\partial_x f_r)^2(x) \md x.
\end{align*}
This gives (\ref{coercivite_1d}) with $m_\nu = 4 \nu^2$. 
This ends the proof of Proposition~\ref{prop_auto_adjoint}.

\end{proof}

\subsection{Semigroup associated to the \textsc{2d} problem}
\label{subsect_bien_pose_2d}

Let $f^0 \in L^2(\Omega)$. For almost every $x \in (-1,1)$, $f^0(x,\cdot) \in L^2(0,1)$ and thus can be expanded in Fourier series as follows
\begin{equation} \label{dec_fourier_f0}
f^0(x,y) = \sum_{n \in \N^*} f_n^0(x) \varphi_n(y),
\end{equation}
where $(\varphi_n)_{n \in \N^*}$ is the Hilbert basis of $L^2(0,1)$ of eigenvectors of the Laplace operator on $H^2(0,1)$ with homogeneous boundary conditions i.e.
\begin{equation*}
\varphi_n (y) := \sqrt{2} \sin(n \pi y), \quad \forall n \in \N^*, 
\end{equation*}
and
\begin{equation*}
f_n^0(x) := \int_{-1}^1 f^0(x,y) \varphi_n(y) \md y.
\end{equation*}

\noindent
For any $t \in (0,T)$, we define the following operator
\begin{equation} \label{def_semigroupe_2d}
(S(t) f^0)(x,y) := \sum_{n \in \N^*} f_n(t,x) \varphi_n(y),
\end{equation}
where for any $n \in \N^*$, $f_n(t) := e^{-A_n t} f_n^0$. Then, the following proposition holds.
\begin{prop} \label{prop_semigroupe_2d}
$S(t)$ defined by (\ref{def_semigroupe_2d}) is a continuous semigroup of contraction in $L^2(\Omega)$.
\end{prop}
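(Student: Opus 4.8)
The plan is to push every property down to the one-dimensional semigroups $e^{-A_n t}$ via Parseval's identity in the variable $y$, and then to invoke the uniform contraction and continuity already provided by Proposition~\ref{prop_bien_pose_1d}. Three things must be checked: that each $S(t)$ is a well-defined contraction of $L^2(\Omega)$, that the family $(S(t))_{t \geq 0}$ obeys the semigroup law, and that $t \mapsto S(t) f^0$ is strongly continuous. Linearity of $S(t)$ is immediate, since the Fourier coefficients $f_n^0$ depend linearly on $f^0$ and each $e^{-A_n t}$ is a linear map.

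First I would establish well-posedness of the definition together with the contraction bound. Since $(\varphi_n)_{n \in \N^*}$ is a Hilbert basis of $L^2(0,1)$, Tonelli's theorem and Parseval's identity give, for any $g \in L^2(\Omega)$ with Fourier coefficients $g_n(x) := \int_0^1 g(x,y) \varphi_n(y) \md y$, the identity $\|g\|_{L^2(\Omega)}^2 = \sum_{n} \|g_n\|_{L^2(-1,1)}^2$. Applied to the partial sums of (\ref{def_semigroupe_2d}), together with the contraction estimate $\|e^{-A_n t} f_n^0\|_{L^2(-1,1)} \leq \|f_n^0\|_{L^2(-1,1)}$ of Proposition~\ref{prop_bien_pose_1d}, this yields $\sum_n \|f_n(t)\|_{L^2(-1,1)}^2 \leq \sum_n \|f_n^0\|_{L^2(-1,1)}^2 = \|f^0\|_{L^2(\Omega)}^2 < \infty$. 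Hence the series defining $S(t) f^0$ converges in $L^2(\Omega)$ (its partial sums are Cauchy by orthonormality of the $\varphi_n$), so $S(t)$ maps $L^2(\Omega)$ into itself and satisfies $\|S(t) f^0\|_{L^2(\Omega)} \leq \|f^0\|_{L^2(\Omega)}$, which is the contraction property.

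Next I would verify the semigroup law. Since $e^{-A_n t}$ reduces to the identity at $t=0$, one has $S(0) = \mathrm{Id}$. For $s,t \geq 0$, the $n$-th Fourier coefficient in $y$ of the $L^2(\Omega)$ function $S(s) f^0$ is, by orthonormality of the $\varphi_n$, exactly $e^{-A_n s} f_n^0$; applying $S(t)$ and using the one-parameter semigroup property of each $e^{-A_n t}$ on $L^2(-1,1)$ then gives $S(t) S(s) f^0 = \sum_n e^{-A_n t} e^{-A_n s} f_n^0 \, \varphi_n = \sum_n e^{-A_n (t+s)} f_n^0 \, \varphi_n = S(t+s) f^0$.

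Finally, for strong continuity, fix $f^0 \in L^2(\Omega)$ and $t_0 \geq 0$, and use Parseval once more to write $\|S(t) f^0 - S(t_0) f^0\|_{L^2(\Omega)}^2 = \sum_n \| e^{-A_n t} f_n^0 - e^{-A_n t_0} f_n^0 \|_{L^2(-1,1)}^2$. Each term tends to $0$ as $t \to t_0$ by the continuity statement $f_n \in C^0([0,+\infty), L^2(-1,1))$ of Proposition~\ref{prop_bien_pose_1d}, and is dominated by $4 \|f_n^0\|_{L^2(-1,1)}^2$, a bound independent of $t$ and summable thanks to the contraction estimate. Splitting the sum into a finite head, handled by termwise continuity, and a tail, made small uniformly in $t$ by the summable domination, gives $\|S(t) f^0 - S(t_0) f^0\|_{L^2(\Omega)} \to 0$. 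The only point requiring genuine care is precisely this interchange of the limit $t \to t_0$ with the infinite sum: this is where the uniform-in-$n$ contraction bound of Proposition~\ref{prop_bien_pose_1d} is indispensable, since it supplies the summable majorant that legitimizes passing to the termwise limit. The remaining manipulations are routine consequences of Parseval's identity and the one-dimensional theory.
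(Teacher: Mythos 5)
Your proof is correct and follows essentially the same route as the paper: reduce every property to the one-dimensional semigroups $e^{-A_n t}$ via Parseval's identity in $y$, and use the uniform contraction bound of Proposition~\ref{prop_bien_pose_1d} as a summable majorant to justify passing to the limit term by term (the paper phrases this as dominated convergence and only checks continuity at $t=0$, which suffices for a contraction semigroup, but your argument at general $t_0$ is the same in substance).
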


\begin{proof}[Proof of Proposition \ref{prop_semigroupe_2d}] By Proposition \ref{prop_bien_pose_1d}, $S(t)$ is well defined, with value in $L^2(\Omega)$, it is a semigroup and satisfies the contraction property.
For any $f^0 \in L^2(\Omega)$, we have
\begin{equation*}
|| S(t) f^0 - f^0 ||_{L^2(\Omega)}^2 = \sum_{n \in \N^*} || f_n(t,\cdot) - f_n^0 ||_{L^2(-1,1)}^2.
\end{equation*}
By Proposition \ref{prop_bien_pose_1d} it comes that 
\begin{gather*}
|| f_n(t,\cdot) - f_n^0 ||_{L^2(-1,1)} \underset{t \to 0}{\longrightarrow} 0,
\\
|| f_n(t,\cdot) - f_n^0 ||_{L^2(-1,1)} \leq 2 || f_n^0 ||_{L^2(-1,1)}.
\end{gather*} 
Thus, by the dominated convergence theorem, $S(t) f^0 \underset{t \to 0}{\longrightarrow} f^0$ in $L^2(\Omega)$.

\end{proof}

\noindent
Recall that the infinitesimal generator $\AA$ of $S(t)$ is defined on
\begin{equation*}
D(\AA) := \left\{ f \in L^2(\Omega) \, ; \, \lim\limits_{t \to 0} \frac{S(t)f - f}{t} \text{ exists } \right\},
\end{equation*}
by
\begin{equation*}
\AA f := \lim\limits_{t \to 0} \frac{S(t)f - f}{t}.
\end{equation*}
The previous limits are related to the $L^2$ norm.
Then, from \cite[Theorems 1.3.1 and 1.4.3]{PazyBook} it comes that $(\AA, D(\AA))$ is a closed dissipative densely defined operator and satisfies for any $\lambda >0$, $R(\lambda I - \AA) = L^2(\Omega)$. The following proposition links the system (\ref{grushin_2d}) and the semigroup $S(t)$.

\begin{prop} \label{prop_lien_2d_1d}
The infinitesimal generator $\AA$ of $S(t)$ is characterized by
\begin{align*} 
D(\AA) = \bigg\{ f \in L^2(\Omega) \, ; \, &f = \sum_{n \in \N^*} f_n(x) \varphi_n(y) \text{ with } f_n \in D(A)
\text{ and }
\\
& \sum_{n \in \N^*} ||A_n f_n||_{L^2(-1,1)}^2 < + \infty \bigg\},
\tag{\theequation} \addtocounter{equation}{1}
\label{domaine_operateur_2d}
\end{align*}
and
\begin{equation} \label{operateur_2d}
\AA f = - \sum_{n \in \N^*} (A_n f_n)(x) \varphi_n(y).
\end{equation}
This operator extends the Grushin differential operator in the sense that
\begin{equation} \label{operateur_2d_extension}
\AA f = \partial^2_{xx} f + |x|^{2 \gamma} \partial^2_{yy} f - \frac{c_\nu}{x^2} f, \quad \forall f \in C_0^\infty(\Omega \backslash \{x=0\}).
\end{equation}
\end{prop}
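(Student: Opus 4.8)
The plan is to show that the operator $\BB$ prescribed by the right-hand sides of (\ref{domaine_operateur_2d})--(\ref{operateur_2d}), namely $\BB f := -\sum_n (A_n f_n)\varphi_n$ on the stated domain, coincides with the generator $\AA$. The guiding principle is that, under the isometric identification of $L^2(\Omega)$ with $\ell^2(\N^*;L^2(-1,1))$ induced by the orthonormal basis $(\varphi_n)$, the semigroup $S(t)$ acts diagonally as $(e^{-A_n t})_n$. By Proposition~\ref{prop_auto_adjoint} each $A_n$ is self-adjoint and non-negative, so $-A_n$ generates the contraction semigroup $e^{-A_n t}$ with generator domain exactly $D(A)$; one then expects the generator of the diagonal semigroup to be the diagonal operator $\BB$, and the argument splits into the inclusions $\BB\subset\AA$ and $\AA\subset\BB$, followed by the identification (\ref{operateur_2d_extension}).

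First I would prove $\BB\subset\AA$. Fix $f=\sum_n f_n\varphi_n\in D(\BB)$ and set $g_n:=A_nf_n$. Since $f_n\in D(A)$, the fundamental theorem of calculus gives $e^{-A_n t}f_n-f_n=-\int_0^t e^{-A_n s}g_n\,\md s$, whence, by orthonormality of $(\varphi_n)$,
\[
\Big\| \tfrac{S(t)f-f}{t} + \sum_n g_n\varphi_n\Big\|_{L^2(\Omega)}^2 = \sum_n \Big\| \tfrac{1}{t}\int_0^t\big(g_n - e^{-A_n s}g_n\big)\,\md s\Big\|_{L^2(-1,1)}^2.
\]
Each summand is bounded by $4\|g_n\|_{L^2}^2$ by contractivity of $e^{-A_n s}$, and tends to $0$ as $t\to0$ since $s\mapsto e^{-A_n s}g_n$ is continuous at $s=0$. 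As $\sum_n\|g_n\|_{L^2}^2<\infty$ by hypothesis, dominated convergence for series yields that the difference quotient converges to $-\sum_n g_n\varphi_n$ in $L^2(\Omega)$, i.e. $f\in D(\AA)$ and $\AA f=\BB f$.

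The main obstacle, as usual for such characterizations, is the reverse inclusion $D(\AA)\subset D(\BB)$; knowing $\BB\subset\AA$ alone does not suffice. I would obtain it through a resolvent argument. Fix $\lambda>0$. For $g=\sum_n g_n\varphi_n\in L^2(\Omega)$, non-negativity and self-adjointness of $A_n$ make $\lambda I+A_n$ boundedly invertible with $\|(\lambda I+A_n)^{-1}\|\le 1/\lambda$; setting $f_n:=(\lambda I+A_n)^{-1}g_n$ one checks $\sum_n\|f_n\|^2\le\lambda^{-2}\|g\|^2$ and, from $A_nf_n=g_n-\lambda f_n$, that $\|A_nf_n\|\le 2\|g_n\|$, so that $\sum_n\|A_nf_n\|^2\le 4\|g\|^2<\infty$. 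Hence $f:=\sum_n f_n\varphi_n\in D(\BB)$ and $(\lambda I-\BB)f=g$, so $\lambda I-\BB$ is surjective. On the other hand $\AA$, being the generator of a contraction semigroup, is dissipative, so $\lambda I-\AA$ is injective. Given any $f\in D(\AA)$, solving $(\lambda I-\BB)h=(\lambda I-\AA)f$ with $h\in D(\BB)$ and using $\BB\subset\AA$ gives $(\lambda I-\AA)(f-h)=0$, whence $f=h\in D(\BB)$. This proves $\AA=\BB$, i.e. (\ref{domaine_operateur_2d})--(\ref{operateur_2d}).

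Finally, for (\ref{operateur_2d_extension}) I would take $f\in C_0^\infty(\Omega\backslash\{x=0\})$ and note that each Fourier coefficient $f_n(x)=\int_0^1 f(x,y)\varphi_n(y)\,\md y$ is smooth and compactly supported in $(-1,0)\cup(0,1)$, hence lies in $D(A)$ with vanishing singular part, so that $A_nf_n=-f_n''+\frac{c_\nu}{x^2}f_n+(n\pi)^2|x|^{2\gamma}f_n$ there. Using $\varphi_n''=-(n\pi)^2\varphi_n$, the three families $(-f_n'')_n$, $(\frac{c_\nu}{x^2}f_n)_n$ and $((n\pi)^2|x|^{2\gamma}f_n)_n$ are precisely the Fourier coefficients of $-\partial^2_{xx}f$, $\frac{c_\nu}{x^2}f$ and $-|x|^{2\gamma}\partial^2_{yy}f$, all of which lie in $L^2(\Omega)$; hence $\sum_n\|A_nf_n\|^2<\infty$, so $f\in D(\AA)=D(\BB)$, and summing the series gives $\AA f=\partial^2_{xx}f+|x|^{2\gamma}\partial^2_{yy}f-\frac{c_\nu}{x^2}f$, as claimed.
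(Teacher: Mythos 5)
Your proposal is correct, but it handles the two inclusions in the opposite way from the paper, and the difference is worth noting. You prove $\BB\subset\AA$ directly (via the identity $e^{-A_nt}f_n-f_n=-\int_0^te^{-A_ns}A_nf_n\,\md s$ and dominated convergence for the series) and then obtain $D(\AA)\subset D(\BB)$ by a resolvent-maximality argument: $\lambda I-\BB$ is surjective because each $(\lambda I+A_n)^{-1}$ is bounded by $1/\lambda$ with $\|A_n(\lambda I+A_n)^{-1}g_n\|\le 2\|g_n\|$, while $\lambda I-\AA$ is injective by dissipativity, so the two operators must coincide. The paper does the reverse: it proves $D(\AA)\subset D(\BB)$ directly, by observing that convergence of the difference quotient in $L^2(\Omega)$ forces convergence of each Fourier component, hence $f_n\in D(A)$ and $(\AA f)_n=-A_nf_n$ with $\sum_n\|A_nf_n\|^2<\infty$; it then gets $D(\BB)\subset D(\AA)$ by showing $D(\BB)\subset D(\AA^*)$ (using the bound $|\lag\AA f,g\rag|\le(\sum\|f_n\|^2)^{1/2}(\sum\|A_ng_n\|^2)^{1/2}$, which relies on the self-adjointness of each $A_n$) and invoking the self-adjointness of $S(t)$, hence of $\AA$. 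Your route is slightly more self-contained in that it uses only dissipativity of the generator of a contraction semigroup rather than self-adjointness of $S(t)$; the paper's is shorter given that self-adjointness is already in hand from Proposition~\ref{prop_auto_adjoint}. Your verification of (\ref{operateur_2d_extension}) fills in what the paper dismisses as ``straightforward computations,'' and does so correctly: the key points are that the Fourier coefficients of $f\in C_0^\infty(\Omega\backslash\{x=0\})$ are supported away from $x=0$, hence lie in $D(A)$ with vanishing singular part, and that the three families of coefficients are square-summable because they are the Fourier coefficients of $L^2(\Omega)$ functions.
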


\begin{proof}[Proof of Proposition \ref{prop_lien_2d_1d}]
Let $f^0  \in D(\AA)$. Then, $\AA f^0 \in L^2(\Omega)$ and
\begin{equation*}
\frac{S(t) f^0 - f^0}{t} \underset{t \to 0}{\longrightarrow} \AA f^0, \quad \text{in } L^2(\Omega).
\end{equation*}
As $\AA f^0 \in L^2(\Omega)$, it can be decomposed in Fourier series in the $y$ variable i.e.
\begin{equation*}
\AA f^0 (x,y) = \sum_{n \in \N^*} (\AA f^0)_n (x) \varphi_n(y).
\end{equation*}
Thus,
\begin{equation*}
\left| \left| \frac{S(t) f^0 - f^0}{t} - \AA f^0 \right| \right|_{L^2(\Omega)}^2 
= \sum_{n \in \N^*} \left| \left| \frac{f_n(t) - f_n^0}{t} - (\AA f^0)_n \right| \right|_{L^2(-1,1)}^2
\underset{t \to 0}{\longrightarrow} 0.
\end{equation*}
This implies that for any $n \in \N^*$, $f_n^0 \in D(A)$ and
\begin{equation*}
(\AA f^0)_n = - A_n f_n^0.
\end{equation*}
We thus get
\begin{equation*}
-\AA f^0 = \sum_{n \in \N^*} (A_n f_n^0)(x) \varphi_n(y).
\end{equation*}
Conversely, let $g \in L^2(\Omega)$ be such that for any $n \in \N^*$, $g_n \in D(A)$ and 
$\sum\limits_{n \in \N^*} ||A_n g_n ||^2_{L^2(-1,1)} < + \infty$. Let $f \in D(\AA)$.
Then,
\begin{equation*}
| \lag \AA f , g \rag | \leq \sum_{n \in \N^*} |\lag A_n f_n ,g_n \rag| 
\leq \left( \sum_{n \in \N^*} ||f_n||_{L^2}^2 \right)^{\frac{1}{2}}
\left( \sum_{n \in \N^*} ||A_n g_n||_{L^2}^2 \right)^{\frac{1}{2}}.
\end{equation*}
This implies that $g \in D(\AA^*)$. Finally, self-adjointness of $S(t)$ and thus of $\AA$ ends the proof of (\ref{domaine_operateur_2d}).
Straightforward computations lead to (\ref{operateur_2d_extension}) and thus ends the proof of Proposition \ref{prop_lien_2d_1d}.

\end{proof}


\noindent
Using Proposition \ref{prop_lien_2d_1d}, we rewrite (\ref{grushin_2d})-(\ref{grushin_2d_ci}) in the form
\begin{equation} \label{syst_2d_abstrait}
\left\{
\begin{aligned}
&f'(t) = \AA f(t) + v(t),  \quad &t& \in [0,T],
\\
&f(0) = f^0,
\end{aligned}
\right.
\end{equation}
where $v(t) : (x,y) \in \Omega \mapsto u(t,x,y) \chi_{\omega}(x,y)$. The following proposition is classical (see e.g. \cite{PazyBook}) and ends this well posedness section
\begin{prop} \label{prop_bien_pose_2d}
For any $f^0 \in L^2(\Omega)$, $T>0$ and $v \in L^1((0,T);L^2(\Omega))$, system (\ref{syst_2d_abstrait}) has a unique mild solution given by
\begin{equation*}
f(t) = S(t) f^0  +  \int_0^t S(t-\tau) v(\tau) \md \tau, \quad t \in [0,T].
\end{equation*}
\end{prop}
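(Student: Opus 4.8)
The plan is to invoke the standard theory of the inhomogeneous abstract Cauchy problem associated to a $C_0$ semigroup, as developed in \cite{PazyBook}. By Proposition~\ref{prop_semigroupe_2d}, $S(t)$ is a strongly continuous semigroup of contractions on $L^2(\Omega)$, and by Proposition~\ref{prop_lien_2d_1d} its infinitesimal generator is $(\AA, D(\AA))$. In this setting the notion of mild solution of (\ref{syst_2d_abstrait}) is \emph{defined} to be a function $f \in C^0([0,T];L^2(\Omega))$ satisfying the variation of constants formula; hence the content of the statement is that this formula produces a well-defined element of $C^0([0,T];L^2(\Omega))$ and that it is the only such function.

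For well-posedness I would first check that the right-hand side is meaningful. The term $S(t)f^0$ lies in $C^0([0,T];L^2(\Omega))$ directly from strong continuity of the semigroup. For the Duhamel term, fix $t \in [0,T]$; the integrand $\tau \mapsto S(t-\tau)v(\tau)$ is strongly measurable on $(0,t)$, being the composition of the measurable map $v$ with the strongly continuous family $S(\cdot)$, and the contraction property yields the pointwise bound $\| S(t-\tau)v(\tau) \|_{L^2(\Omega)} \leq \| v(\tau) \|_{L^2(\Omega)}$. Since $v \in L^1((0,T);L^2(\Omega))$, the integrand is Bochner integrable and $\int_0^t S(t-\tau) v(\tau)\,\md\tau$ is a well-defined element of $L^2(\Omega)$.

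Next I would verify continuity in $t$ of the convolution term. Writing, for $h>0$,
\begin{align*}
&\int_0^{t+h} S(t+h-\tau) v(\tau)\,\md\tau - \int_0^t S(t-\tau) v(\tau)\,\md\tau \\
&\qquad = \int_t^{t+h} S(t+h-\tau) v(\tau)\,\md\tau + \int_0^t \big( S(t+h-\tau) - S(t-\tau) \big) v(\tau)\,\md\tau,
\end{align*}
the first integral tends to $0$ because $\tau \mapsto \| v(\tau) \|_{L^2(\Omega)}$ is integrable, while the second tends to $0$ by strong continuity of $S$ together with the dominated convergence theorem (the integrand being bounded by $2\| v(\tau) \|_{L^2(\Omega)}$). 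A symmetric argument handles $h<0$. Hence $f \in C^0([0,T];L^2(\Omega))$, which establishes existence of a mild solution.

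Uniqueness is then immediate: since a mild solution is by definition a continuous function satisfying the variation of constants formula, and that formula determines $f(t)$ unambiguously from $f^0$ and $v$, any two mild solutions coincide. The only genuinely technical point is the continuity of the convolution term; everything else reduces to the contraction estimate and the $C_0$ property already secured in Propositions~\ref{prop_semigroupe_2d} and~\ref{prop_lien_2d_1d}.
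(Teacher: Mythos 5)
Your argument is correct and is precisely the classical one the paper invokes: the paper gives no proof, simply noting the result is standard and citing \cite{PazyBook}, and your write-up spells out exactly that standard argument (Bochner integrability of the Duhamel term via the contraction bound, continuity by dominated convergence, uniqueness by definition of mild solution). No discrepancy with the paper's approach.
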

In the following a solution of (\ref{grushin_2d}) will mean a solution of (\ref{syst_2d_abstrait}).

\subsection{General theory of self-adjoint extensions}
\label{subsect_extensions}

This subsection is dedicated to enlighten the choices made in the construction of the functional setting leading to the definition (\ref{def_D(A)}) of $D(A)$.

The question of finding the self-adjoint extensions of a given closed symmetric operator is classical. In \cite[Theorem X.2]{ReedSimon2} such extensions are characterized by means of isometries between the deficiency subspaces. 
The particular case of Sturm-Liouville operators has been widely studied : most of these result are contained in \cite{ZettlBook}. The self-adjoint extensions are characterized by means of generalized boundary conditions.
In our case, we are concerned with the Sturm-Liouville operator $-\frac{\md^2}{\md x^2} + \frac{c_\nu}{x^2}$ on the interval $(-1,1)$. This fits in the setting of \cite[Chapter 13]{ZettlBook}. The number of boundary conditions to impose is given by the deficiency index. Following \cite[Proposition 3.1]{AlekseevaAnanieva}, it comes that our operator on the interval $(0,1)$ has deficiency index $2$. This is closely related to the fact that $\nu \in (0,1)$. Then, \cite[Lemma 13.3.1]{ZettlBook} implies that the deficiency index for the interval $(-1,1)$ is $4$. We thus get the following proposition which is simply a rewriting of \cite[Theorem 13.3.1 Case 5]{ZettlBook}.

\begin{prop} \label{prop_extensions_Zettl}
Let $u$ and $v$ in $D_{max}$ be such that their restriction on $(0,1)$ (resp. $(-1,0)$) are linearly independent modulo $H^2_0(0,1)$ (resp. $H^2_0(-1,0)$) and
\begin{equation*}
[u,v](-1) = [u,v] (0^-) = [u,v](0^+) = [u,v](1) = 1.
\end{equation*}
Let $M_1, \dots, M_4$ be $4 \times 2$ complex matrices. Then every self-adjoint extension of the minimal operator is given by the restriction of $D_{max}$ to the functions $f$ satisfying the boundary conditions
\begin{equation*}
M_1 \begin{pmatrix} [f,u](-1) \\ [f,v](-1) \end{pmatrix}  + 
M_2 \begin{pmatrix} [f,u](0^-) \\ [f,v](0^-) \end{pmatrix}  + 
M_3 \begin{pmatrix} [f,u](0^+) \\ [f,v](0^+) \end{pmatrix}  + 
M_4 \begin{pmatrix} [f,u](1) \\ [f,v](1) \end{pmatrix}  = 0,
\end{equation*}
where the matrices satisfy $(M_1 \, M_2 \, M_3 \, M_4)$ has full rank and 
\begin{equation*}
M_1 E M_1^* - M_2 E M_2^* + M_3 E M_3^* - M_4 E M_4^* = 0, 
\text{ with } E := \begin{pmatrix} 0 & -1 \\ 1 & 0 \end{pmatrix}.
\end{equation*}
Conversely, every choice of such matrices defines a self-adjoint extension.
\end{prop}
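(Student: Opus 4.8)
The plan is to present Proposition~\ref{prop_extensions_Zettl} as a direct application of the Glazman--Krein--Naimark characterization of self-adjoint extensions in its Sturm--Liouville form, as developed in \cite[Chapter 13]{ZettlBook}. Accordingly, the proof reduces to verifying that the differential expression $\ell := -\frac{\md^2}{\md x^2} + \frac{c_\nu}{x^2}$ on $(-1,1)$, with the interior singularity at $x=0$ treated by splitting the domain into $(-1,0)$ and $(0,1)$, satisfies the hypotheses of \cite[Theorem 13.3.1]{ZettlBook}, and then to transcribing the conclusion of that theorem into the present notation. First I would record that $\ell$ is already in formally symmetric Sturm--Liouville form (with $p\equiv 1$ and unit weight), so that the minimal operator $(A_0, D_{min})$ of (\ref{def_Dmin}) is symmetric with equal deficiency indices, and I would view $(-1,1)$ as the disjoint union of the two one-sided intervals, each carrying $\ell$.

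The decisive step is the computation of the deficiency index, since this selects which case of \cite[Theorem 13.3.1]{ZettlBook} applies. On $(0,1)$ the endpoint $1$ is regular while the endpoint $0$ is singular; because $\nu \in (0,1)$, \emph{both} fundamental solutions $x^{\nu+\frac{1}{2}}$ and $x^{-\nu+\frac{1}{2}}$ of (\ref{pb_sturm_fs}) lie in $L^2$ near $0$, so $0$ is of limit-circle type. Following \cite[Proposition 3.1]{AlekseevaAnanieva}, this gives deficiency index $2$ on $(0,1)$, with minimal and maximal domains as in the second step of the proof of Proposition~\ref{prop_auto_adjoint}; the same holds on $(-1,0)$ by symmetry. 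By \cite[Lemma 13.3.1]{ZettlBook} the deficiency indices add over the disjoint intervals, so on $(-1,1)$ the deficiency index equals $2+2=4$. This is precisely the hypothesis placing us in Case~5 of \cite[Theorem 13.3.1]{ZettlBook}, where four linearly independent boundary conditions are imposed and all four endpoints $-1,0^-,0^+,1$ are active.

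Next I would set up the boundary form. Lemma~\ref{lemme_forme_Lagrange} shows that for $f,g \in D_{max}$,
\begin{equation*}
\lag \ell f, g\rag - \lag f, \ell g \rag = [f,g](1) - [f,g](0^+) + [f,g](0^-) - [f,g](-1),
\end{equation*}
so the symplectic boundary form on the $8$-dimensional quotient $D_{max}/D_{min}$ is the alternating sum of the Lagrange brackets at the four endpoints. Fixing a pair $u,v \in D_{max}$ normalized by $[u,v](-1)=[u,v](0^-)=[u,v](0^+)=[u,v](1)=1$ — such a pair exists since, modulo $H^2_0$, each side supplies exactly two independent singular solutions — the theorem asserts that the self-adjoint extensions of the minimal operator are exactly the restrictions of $D_{max}$ to the functions annihilated by four linear conditions on the boundary data $\big([f,u],[f,v]\big)$ at the four points, encoded by the $4\times 2$ matrices $M_1,\dots,M_4$. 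The full-rank condition on $(M_1\,M_2\,M_3\,M_4)$ expresses independence of the four conditions (cutting the $8$-dimensional quotient down to a $4$-dimensional subspace), while $M_1 E M_1^* - M_2 E M_2^* + M_3 E M_3^* - M_4 E M_4^* = 0$ expresses that this subspace is maximal isotropic for the boundary form, the sign pattern $+,-,+,-$ being inherited from the orientation of the brackets above. This is exactly the statement of the proposition.

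The genuine content beyond citation is the verification of hypotheses, and the step requiring the most care is the well-posedness of the Lagrange brackets at the interior singularity: one must check that for $f \in D_{max}$ the one-sided limits $[f,u](0^\pm)$ and $[f,v](0^\pm)$ exist and are finite. This is where the limit-circle classification combines with Lemma~\ref{lemme_fr_en_0}, the latter ensuring that the regular part in $\tilde{H}^2_0(-1,1)$ contributes nothing to the brackets at $0$, so that only the finitely many singular coefficients $c_1^\pm, c_2^\pm$ survive and the bracket limits reduce to the finite bilinear expressions already computed in the proof of Proposition~\ref{prop_auto_adjoint}. I expect the remaining bookkeeping of the alternating signs against the orientation of the boundary form to be the only other delicate point, and it is fixed once the identity of Lemma~\ref{lemme_forme_Lagrange} is taken as the reference sign convention.
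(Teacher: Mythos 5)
Your proposal is correct and follows essentially the same route as the paper, which establishes the deficiency index $2+2=4$ via \cite[Proposition 3.1]{AlekseevaAnanieva} and \cite[Lemma 13.3.1]{ZettlBook} and then presents the proposition as ``simply a rewriting of'' \cite[Theorem 13.3.1, Case 5]{ZettlBook}. The extra detail you supply (the limit-circle classification at $0$, the symplectic reading of the matrix condition, and the finiteness of the brackets $[f,u](0^\pm)$ via Lemma~\ref{lemme_fr_en_0}) is consistent with, and a harmless elaboration of, what the paper leaves implicit.
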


\noindent
We end this section by giving the choice of such matrices that we made and give another functional setting that would lead to well posedness but that is not adapted to controllability issues. We define on $(0,1)$ $u$ and $v$ to be solutions of 
\begin{equation*}
-f''(x) + \frac{c_\nu}{x^2} f(x) = 0
\end{equation*}
with $(u(1) = 0, u'(1)=1)$ and $(v(1)=-1, v'(1)=0)$ i.e.
\begin{align*}
u(x) &= \frac{1}{2\nu} x^{\nu+ \frac{1}{2}} - \frac{1}{2\nu} x^{-\nu+\frac{1}{2}}, 
\\
v(x) &= - \frac{\nu-\frac{1}{2}}{2\nu} x^{\nu+\frac{1}{2}} - \frac{\nu+\frac{1}{2}}{2\nu} x^{-\nu+\frac{1}{2}}.
\end{align*}
Thus for any $f \in D_{max}$, $[f,u](1)=f(1)$ and $[f,v](1)=f'(1)$, and for any $x \in [0,1]$, $[u,v](x) \equiv 1$.
We design $u$ and $v$ similarly on $(-1,0)$ i.e.
\begin{align*}
u(x) &= -\frac{1}{2\nu} |x|^{\nu+\frac{1}{2}} + \frac{1}{2\nu} |x|^{-\nu+\frac{1}{2}}, 
\\
v(x) &= - \frac{\nu-\frac{1}{2}}{2\nu} |x|^{\nu+\frac{1}{2}} - \frac{\nu+\frac{1}{2}}{2\nu} |x|^{-\nu+\frac{1}{2}}.
\end{align*}

\noindent
Due to the choice of functions $u$ and $v$, the homogeneous Dirichlet conditions at $\pm 1$ are implied by the choice
\begin{equation*}
M_1 = \begin{pmatrix} 1 & 0 \\ 0 & 0 \\ 0 & 0 \\ 0 & 0 \end{pmatrix}, \:
M_2 = \begin{pmatrix} 0 & 0 \\ \multicolumn{2}{c}{\multirow{2}{*}{$\tilde{M}_2$}} \\ & \\ 0 & 0 \end{pmatrix}, \:
M_3 = \begin{pmatrix} 0 & 0 \\ \multicolumn{2}{c}{\multirow{2}{*}{$\tilde{M}_3$}} \\ & \\ 0 & 0 \end{pmatrix}, \:
M_4 = \begin{pmatrix} 0 & 0 \\ 0 & 0 \\ 0 & 0 \\ 1 & 0 \end{pmatrix}.
\end{equation*}

\noindent
Then, the conditions of Proposition \ref{prop_extensions_Zettl} are satisfied if and only if the matrix $(\tilde{M}_2 \, \tilde{M}_3)$ has rank $2$ and $\det(\tilde{M}_2) = \det(\tilde{M}_3)$. Straightforward computations lead to, for any $f \in D_{max}$
\begin{align*}
[f,u](0^+) &= c_1^+ + c_2^+, \qquad 
[f,v](0^+) = \left(\nu+\frac{1}{2} \right)  c_1^+ + \left(-\nu + \frac{1}{2} \right) c_2^+,
\\
[f,u](0^-) &= c_1^- + c_2^-, \qquad 
[f,v](0^-) = -\left(\nu+\frac{1}{2} \right)  c_1^- - \left(-\nu + \frac{1}{2} \right) c_2^-.
\end{align*}

\medskip
\textbf{Construction of $D(A)$.}
The choice 
\begin{equation*}
\tilde{M}_2 = \tilde{M}_3 = \begin{pmatrix} 1 & 0 \\ 0 & 1 \end{pmatrix}
\end{equation*} 
lead to the definition of $D(A)$ in (\ref{def_D(A)}). The computations done in the fourth step of the proof of Proposition~\ref{prop_auto_adjoint} (see (\ref{positivite_A})) prove the positivity and thus, Proposition~\ref{prop_auto_adjoint} could also be seen as an application of Proposition~\ref{prop_extensions_Zettl}.

\medskip
\textbf{Other construction.}
At this stage, there is another choice that would lead to a self-adjoint positive extension. If, we set 
\begin{equation*}
\tilde{M}_2 =  \begin{pmatrix} 0 & 0 \\ 0 & 1 \end{pmatrix} \text{ and } 
\tilde{M}_3 = \begin{pmatrix} 1 & 0 \\ 0 & 0 \end{pmatrix},
\end{equation*}
then the domain with conditions
\begin{equation} \label{mauvaises_cond_transmission}
c_1^+ = - c_2^+,\quad 
c_1^- = - \frac{-\nu+\frac{1}{2}}{\nu+\frac{1}{2}} c_2^-,
\end{equation}
give rise to a self-adjoint positive operator. However, from a point of view of controllability, this domain does not seem interesting as this conditions couple the coefficients on each side on the singularity. As it can be noticed from the proof of Proposition~\ref{prop_uc_1er_cote}, once the domain of $A_n$ is defined to ensure well-posedness, the only requirement on the transmission condition to obtain the approximate controllability result of Theorem~\ref{th_controle_approche_2d} is
\begin{equation*}
c_1^- = c_2^- = 0 \quad \Longrightarrow \quad c_1^+ = c_2^+ = 0.
\end{equation*}
This is not satisfied for the transmission conditions (\ref{mauvaises_cond_transmission}) and we cannot apply the results developed in this article to this functional setting.

As a matter of fact, one gets from \cite[Proposition 10.4.2]{ZettlBook} (characterizing self-adjoint extensions in the case of a boundary singularity, similarly to Proposition~\ref{prop_extensions_Zettl} for internal singularity) that the two problems on $(-1,0)$ and $(0,1)$ with conditions (\ref{mauvaises_cond_transmission}) are well-posed. Thus the dynamics really are decoupled and approximate controllability from one side of the singularity does not hold for the transmission conditions (\ref{mauvaises_cond_transmission}).

\begin{rmq}
Notice that the goal here is not to give an exhaustive characterization. This will be pointless with regards to the main goal of giving a meaning to (\ref{grushin_2d}). Indeed, in the construction of the semigroup $S$ we here imposed the same transmission conditions for each Fourier component. As soon as we have different transmission conditions ensuring to have a self-adjoint extension $A_n$, there is infinitely many extensions $\AA$ generating a semigroup. 
\end{rmq}

\medskip
\textbf{Symmetry of transmission.}
At this stage, one can wonder if there are non-symmetric transmission conditions i.e. a choice of matrices $\tilde{M}_2$ and $\tilde{M}_3$ such that
\begin{gather*}
c_1^- = c_2^- = 0 \quad \Longrightarrow \quad c_1^+ = c_2^+ = 0,
\\
c_1^+ = c_2^+ = 0 \quad \hspace{4pt}\not\hspace{-4pt}\Longrightarrow \quad c_1^- = c_2^- = 0.
\end{gather*}
Rewriting the condition
\begin{equation*}
\tilde{M}_2 \begin{pmatrix} [f,u](0^-) \\ [f,v](0^-) \end{pmatrix}  + 
\tilde{M}_3 \begin{pmatrix} [f,u](0^+) \\ [f,v](0^+) \end{pmatrix}  
= \begin{pmatrix} 0 \\ 0 \end{pmatrix}
\end{equation*}
as
\begin{equation*}
\hat{M}_2 \begin{pmatrix} c_1^- \\ c_2^- \end{pmatrix}  + 
\hat{M}_3 \begin{pmatrix} c_1^+ \\ c_2^+ \end{pmatrix}  
= \begin{pmatrix} 0 \\ 0 \end{pmatrix}
\end{equation*}
we get that the condition $\det(\tilde{M}_2) = \det(\tilde{M}_3)$ implies $\det(\hat{M}_2) = \det(\hat{M}_3)$. It is then not possible to have only one of the matrices $\hat{M}_2$ and $\hat{M}_3$ invertible. Thus, in this setting, there is no choice of non-symmetric transmission conditions.

\section{Unique continuation}
\label{sect_continuation_unique}

We consider the adjoint system of (\ref{grushin_2d})
\begin{equation} \label{grushin_2d_adjoint}
\left\{
\begin{aligned}
& \partial_t g - \partial^2_{xx} g - |x|^{2 \gamma} \partial^2_{yy} g + \frac{c_\nu}{x^2} g = 0, \; &(t,x,y)& \in (0,T) \times \Omega,
\\
&g(t,x,y) = 0, \; &(t,x,y)& \in (0,T) \times \partial \Omega,
\\
&g(0,x,y) = g^0(x,y), \; &(x,y)& \in \Omega.
\end{aligned}
\right.
\end{equation}
Here again this system is understood in the sense of the self-adjoint extension $\AA$ designed i.e. for any $g^0 \in L^2 (\Omega)$, the solution of (\ref{grushin_2d_adjoint}) is given by $S(t) g^0$.

This section is dedicated to the study of the following unique continuation property
\begin{defi} \label{Def:UC}
Let $T>0$ and $\omega \subset \Omega$. We say that the unique continuation property from $\omega$ holds for system (\ref{grushin_2d_adjoint}) if the only solution of (\ref{grushin_2d_adjoint}) vanishing on $(0,T) \times \omega$ is identically zero on $(0,T) \times \Omega$ i.e.
\begin{equation*}
\Big( g^0 \in L^2(\Omega), \,  \chi_{\omega} S(t) g^0 = 0 \text{ for a.e. } t \in (0,T)  \Big) 
\Longrightarrow  g^0 =0.
\end{equation*}
\end{defi}

By a classical duality argument, Theorem~\ref{th_controle_approche_2d} is equivalent to the following unique continuation theorem.
\begin{theo} \label{th_continuation_unique}
Let $T>0$, $\gamma > 0$ and $\nu \in (0,1)$. Let $\omega$ be an open subset of $\Omega$. The unique continuation property from $\omega$ holds for the adjoint system (\ref{grushin_2d_adjoint}) in the sense of Definition~\ref{Def:UC}.
\end{theo}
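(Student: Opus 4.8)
The plan is to reduce the two-dimensional unique continuation to a family of one-dimensional problems, to cross the singular line $\{x=0\}$ using the transmission conditions built into $D(A)$, and to close the argument with a Carleman estimate adapted to the resulting boundary singularity. First I would use that $\omega$ is open and that $\{x=0\}$ is negligible: $\omega$ contains a nonempty open box $(a,b)\times(\alpha,\beta)$ lying in one connected component of $\Omega\setminus\{x=0\}$, say $0<a<b<1$ (the other side is symmetric). Writing $g=S(t)g^0$ for the solution of (\ref{grushin_2d_adjoint}), the hypothesis gives $g\equiv 0$ on $(0,T)\times(a,b)\times(\alpha,\beta)$. On each strip $(\varepsilon,1)\times(0,1)$ with $\varepsilon\in(0,a)$ the operator $\partial_t-\partial^2_{xx}-|x|^{2\gamma}\partial^2_{yy}+\frac{c_\nu}{x^2}$ is uniformly parabolic with smooth bounded coefficients, so the classical unique continuation property for uniformly parabolic operators propagates the vanishing from the box to all of $(0,T)\times(\varepsilon,1)\times(0,1)$; letting $\varepsilon\to 0$ yields $g\equiv 0$ on $(0,T)\times\big((0,1)\times(0,1)\big)$. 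Expanding $g(t,x,y)=\sum_{n\in\N^*}g_n(t,x)\varphi_n(y)$ with $g_n(t)=e^{-A_nt}g_n^0$ and using that $(\varphi_n)$ is a Hilbert basis of $L^2(0,1)$, I conclude that $g_n(t,\cdot)=0$ a.e.\ on $(0,1)$ for every $n$ and a.e.\ $t\in(0,T)$.

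Next I would cross the singularity. Fix such $n,t$ and decompose $g_n(t,\cdot)=g_{n,r}+g_{n,s}\in D(A)$. The vanishing of $g_n(t,\cdot)$ on $(0,1)$ means $g_{n,r}=-g_{n,s}$ there, and combined with the regularity and Cauchy conditions $g_{n,r}(0^+)=\partial_x g_{n,r}(0^+)=0$ satisfied by any element of $\tilde H^2_0(-1,1)$, this forces the right singular coefficients to vanish: $c_1^+(g_n(t))=c_2^+(g_n(t))=0$. The transmission conditions, in the matrix form (\ref{cond_transmission_matricielle}) whose matrix is invertible, then give $c_1^-(g_n(t))=c_2^-(g_n(t))=0$ as well. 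Hence $g_n(t,\cdot)$ carries no singular part at all, and its restriction to $(-1,0)$ is an $H^2$ function with homogeneous Cauchy data at the singular endpoint, $g_n(t,0^-)=\partial_x g_n(t,0^-)=0$, together with the Dirichlet condition $g_n(t,-1)=0$. This is precisely the mechanism by which information crosses the singular line, and it is the reason the choice (\ref{def_D(A)}) of $D(A)$ — rather than a decoupling one — is essential.

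It then remains to prove that a solution of the one-dimensional singular heat equation $\partial_t g_n-\partial^2_{xx}g_n+\frac{c_\nu}{x^2}g_n+(n\pi)^2|x|^{2\gamma}g_n=0$ on $(0,T)\times(-1,0)$, with vanishing Cauchy data at $x=0$ and Dirichlet data at $x=-1$, must vanish identically; this is the announced reduction to a boundary inverse-square singularity. Here I would establish a Carleman estimate for the operator $\partial_t-\partial^2_{xx}+\frac{c_\nu}{x^2}$ on $(-1,0)$, with a weight $\phi$ blowing up at $x=0$ so as to localize the estimate at the singular boundary and to render the boundary contributions at $x=0$ harmless given the available data; for each fixed $n$ the lower-order term $(n\pi)^2|x|^{2\gamma}g_n$ is bounded in $x$ and is absorbed by taking the Carleman parameter large. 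Applying the estimate to $g_n$, the right-hand side is zero and the boundary terms vanish, which forces $g_n\equiv 0$ on $(-1,0)$. Gathering all modes gives $g(t,\cdot,\cdot)\equiv 0$ on $\Omega$ for a.e.\ $t$, and hence $g^0=0$, proving Theorem~\ref{th_continuation_unique}.

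The main obstacle is the Carleman estimate of the last step, specifically the control of the singular potential $\frac{c_\nu}{x^2}$. When $c_\nu<0$ I would split $\int_{-1}^0\big((\partial_x\cdot)^2+\frac{c_\nu}{x^2}(\cdot)^2\big)$ and invoke the Hardy inequality (\ref{Hardy_1d_(-1,1)}) localized to $(-1,0)$, exploiting the identity $1+4c_\nu=4\nu^2>0$ (this is exactly where $\nu>0$, i.e.\ $c>-\tfrac14$, is used) so that the singular term is absorbed into the gradient term with a strictly positive remainder. The delicate points to check are the admissibility of the boundary terms of the Carleman estimate at $x=0$ under only the available singular-free zero Cauchy data, the correct sign of the boundary contribution at $x=-1$ through the choice of $\partial_x\phi$, and the compatibility of this Hardy-based absorption with the behaviour of the weight near the singular endpoint.
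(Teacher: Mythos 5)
Your first two steps reproduce the paper's argument (Proposition~\ref{prop_uc_1er_cote}) exactly, up to a harmless relabelling of the two sides: propagation of the vanishing by classical unique continuation for uniformly parabolic operators on $(\varepsilon,1)\times(0,1)$, identification of the vanishing of the singular coefficients on the controlled side from the asymptotics $g_{n,r}(x)=O(|x|^{3/2})$ versus the exponents $\nu+\tfrac12,\,-\nu+\tfrac12<\tfrac32$, and transfer to the other side by invertibility of the transmission matrix (\ref{cond_transmission_matricielle}). This is correct and is precisely where the choice of $D(A)$ enters. The reduction to a \textsc{1d} problem with a boundary inverse-square singularity and homogeneous Cauchy data $g_n(t,0)=\partial_xg_n(t,0)=0$ is also the paper's.

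The gap is in the Carleman step, which is where the real difficulty of the theorem lies. Your plan for absorbing the singular potential invokes the basic Hardy inequality (\ref{Hardy_1d_(-1,1)}) together with the splitting $1+4c_\nu=4\nu^2>0$, and only in the case $c_\nu<0$. This is the argument that proves positivity of the quadratic form $\int\big(z_x^2+\tfrac{c_\nu}{x^2}z^2\big)$; it is not the right tool inside a Carleman estimate. After conjugation by $e^{-R\sigma}$ the singular potential produces the cross term $-2R\,c_\nu\int\!\!\int\frac{\sigma_x}{x^3}z^2$, whose strength and sign depend on the weight. With the weight the paper is forced to take, $\sigma=\theta(t)x^b$ with $b\in(0,1)$ (so that $\sigma_x>0$ handles the boundary term at the outer endpoint and the degeneracy potential, and $\sigma$ is concave so the $R^3$ term dominates), the singular terms are of order $x^{b-4}$ --- far more singular than $x^{-2}$ --- and (\ref{Hardy_1d_(-1,1)}) cannot control them. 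The paper needs the sharper Hardy inequality of Proposition~\ref{Prop:Hardy}, $\frac{(1-\alpha)^2}{4}\int_0^1x^{\alpha-2}z^2\le\int_0^1x^\alpha z_x^2$ for $\alpha\in[-2,2)$, whose validity for negative $\alpha$ rests exactly on the extra Neumann information $z_x(0)=0$ gained in your second step. Moreover, for $\nu\in(\tfrac12,1)$ one has $c_\nu>0$ and the cross term has the \emph{bad} sign when $\sigma_x>0$; your proposal is silent on this case, whereas the paper must tune $b=2-2\nu$ so that $(1-b)(3-b)-4c_\nu=0$ (see (\ref{condition_b}) and the ``singular potential'' step of the proof of Proposition~\ref{prop_Carleman}). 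As written, your absorption strategy would not close, and the case $c_\nu\ge0$ is not addressed at all.
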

Without loss of generality, we may assume that $\omega$ is an open subset of one connected component of $ \Omega \backslash \{ x= 0\}$. As it will be noticed in  Remark~\ref{rk_controle_deux_cotes}, if $\omega$ intersects both connected components of $\Omega \backslash \{ x = 0 \}$ then the proof is simpler.
In the following we assume that $\omega \subset (-1,0) \times (0,1)$.

The rest of this section is dedicated to the proof of Theorem~\ref{th_continuation_unique}. In Sect.~\ref{subsect_uc_1}, we prove that if $g(t):= S(t) g^0$ is vanishing on $(0,T) \times \omega$ then it is vanishing on $(0,T) \times (-1,0) \times (0,1)$. This will imply that any Fourier component $g_n$ has no singular part and is identically zero on $[-1,0]$. 

Then, we are left to study a one dimensional equation on the regular part with a boundary inverse square singularity. Dealing with the regular part, we know furthermore that the function under study has the $H^2$ regularity and satisfies $\partial_x g_n(t,0) =0$. This will be used in Sect.~\ref{subsect_carleman} to prove a suitable Carleman estimate, relying on an adapted Hardy inequality, to end the proof of Theorem~\ref{th_continuation_unique}.

\subsection{Reduction to the case of a boundary singularity}
\label{subsect_uc_1}

The goal of this section is the proof of the following proposition
\begin{prop} \label{prop_uc_1er_cote}
Let $T>0$, $\gamma > 0$, $\nu \in (0,1)$ and $\omega$ be an open subset of $(-1,0) \times (0,1)$. Assume that $g^0 \in L^2(\Omega)$ is such that $ g(t) := S(t) g^0$ is vanishing on $(0,T) \times \omega$. Then $g$ is vanishing on $(0,T) \times (-1,0) \times (0,1)$. Moreover, for any $n \in \N^*$, the $n^{th}$ Fourier component satisfies
\begin{gather*}
c_1^-(g_n) = c_2^-(g_n) = c_1^+(g_n) = c_2^+(g_n) =0,
\\
g_n(t,x) = \chi_{(0,1)}(x) g_{n,r}(t,x) \text{ for every } (t,x) \in (0,T) \times (-1,1),
\end{gather*}
where $g_{n,r}$ is the regular part of $g_n$.
\end{prop}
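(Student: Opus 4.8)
The plan is to combine a unique continuation statement for uniformly parabolic operators with the algebraic rigidity of the transmission conditions. Write $g(t,x,y) = \sum_{n \in \N^*} g_n(t,x) \varphi_n(y)$ with $g_n(t) = e^{-A_n t} g_n^0$. On the left connected component $(-1,0) \times (0,1)$ the operator $\partial_t - \partial^2_{xx} - |x|^{2\gamma} \partial^2_{yy} + \frac{c_\nu}{x^2}$ is uniformly parabolic with smooth (in fact real-analytic in $x$) coefficients on every compact set avoiding $\{x=0\}$: there the coefficient $|x|^{2\gamma}$ of $\partial^2_{yy}$ is bounded below by a positive constant and the zeroth order coefficient $c_\nu/x^2$ is bounded. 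First I would propagate the vanishing of $g$ from $(0,T) \times \omega$ to the whole cylinder $(0,T) \times (-1,0) \times (0,1)$ by unique continuation for such operators. Since $\omega$ is open and $(-1,0)\times(0,1)$ is connected, a chaining argument over a covering by parabolic cylinders compactly contained in $\{x<0\}$ (on each of which the operator is uniformly parabolic with bounded smooth coefficients, so that local unique continuation applies) carries the zero set throughout the component; the degeneracy and singularity on $\{x=0\}$ lie only on the boundary of this region and never enter the argument. Alternatively, spatial analyticity of solutions of parabolic equations with space-analytic coefficients, together with connectedness, yields the same conclusion.

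Once $g \equiv 0$ on $(0,T) \times (-1,0) \times (0,1)$, expanding in the orthonormal basis $(\varphi_n)$ and using orthogonality in $L^2(0,1)$ gives $g_n(t,x) = 0$ for a.e. $(t,x) \in (0,T) \times (-1,0)$ and every $n$. For fixed $t>0$, Proposition \ref{prop_bien_pose_1d} gives $g_n(t) \in D(A)$, so it splits as $g_n = g_{n,r} + g_{n,s}$ with $g_{n,r} \in \tilde{H}^2_0(-1,1)$ and $g_{n,s} \in \FF_s$. On $(-1,0)$ the identity $g_n \equiv 0$ reads $g_{n,r}(x) = - c_1^-(g_n) |x|^{\nu+\frac{1}{2}} - c_2^-(g_n) |x|^{-\nu+\frac{1}{2}}$. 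Dividing by $|x|^{3/2}$ and letting $x \to 0^-$, the first limit of Lemma \ref{lemme_fr_en_0} forces the right-hand side to tend to $0$; since for $\nu \in (0,1)$ the two terms blow up at the distinct rates $|x|^{-\nu-1}$ and $|x|^{\nu-1}$, this successively forces $c_2^-(g_n) = 0$ and then $c_1^-(g_n) = 0$. Consequently $g_{n,r} \equiv 0$ on $(-1,0)$ as well.

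Finally I would feed $c_1^-(g_n) = c_2^-(g_n) = 0$ into the matrix form (\ref{cond_transmission_matricielle}) of the transmission conditions. As observed in Remark \ref{rk_cond_transmission}, the transmission matrix is invertible, so $c_1^+(g_n) = c_2^+(g_n) = 0$ immediately. Hence $g_n$ has no singular part, $g_n = g_{n,r} \in \tilde{H}^2_0(-1,1)$, and since $g_{n,r}$ vanishes on $(-1,0)$ we obtain $g_n(t,x) = \chi_{(0,1)}(x) g_{n,r}(t,x)$, the asserted structure. The main obstacle is the first step: securing propagation of the zero set by parabolic unique continuation on a region where the operator, although uniformly parabolic and analytic in the interior, becomes degenerate and singular exactly on the boundary line $\{x=0\}$ one is approaching; everything else is algebraic, and it is precisely here that the specific choice of transmission conditions pays off, the invertibility of the transmission matrix being what transfers the information gained on the left across the singularity.
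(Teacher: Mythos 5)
Your proposal is correct and follows essentially the same route as the paper: uniform parabolicity of the operator on subdomains $(-1,-\varepsilon)\times(0,1)$ away from $\{x=0\}$, classical unique continuation (the paper cites Saut--Scheurer) plus exhaustion in $\varepsilon$ to propagate the zero set over the whole left component, then uniqueness of the regular/singular decomposition to get $c_1^-=c_2^-=0$, and invertibility of the transmission matrix (\ref{cond_transmission_matricielle}) to kill $c_1^+,c_2^+$. The one step the paper makes explicit and you elide is the verification that the abstract semigroup solution actually solves the PDE in the distributional sense on $\Omega_\varepsilon^-$: since $g$ is defined only through the semigroup of a custom self-adjoint extension, the paper checks via a duality computation against $\phi\in C^\infty_0(\Omega_\varepsilon^-)$ that $\AA h$ coincides there with the differential expression $\partial^2_{xx}+|x|^{2\gamma}\partial^2_{yy}-c_\nu/x^2$ before invoking unique continuation; this is short but should not be skipped. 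Your justification of $c_1^-=c_2^-=0$ via the distinct blow-up rates $|x|^{\nu-1}$ and $|x|^{-\nu-1}$ against Lemma~\ref{lemme_fr_en_0} is a slightly more detailed (and welcome) account of what the paper attributes to uniqueness of the decomposition.
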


\smallskip
\noindent
\begin{proof}[Proof of Proposition \ref{prop_uc_1er_cote}] 
Let $\varepsilon > 0$ be such that
\begin{equation*}
\omega \subset \Omega^-_{\varepsilon} := (-1,-\varepsilon) \times (0,1).
\end{equation*}
For every $t \in [0,T]$,
\begin{equation*}
(S(t) g^0)(x,y) = \sum_{n \in \N^*} g_n(t,x) \varphi_n(y),
\end{equation*}
where $g_n$ is the solution of (\ref{grushin_1d}) with initial condition $g_n^0$.

\noindent
We check that on $\Omega^-_{\varepsilon}$, the operator $\AA$ is uniformly elliptic. Let $h \in D(\AA)$ and $\phi \in C^\infty_0(\Omega_\varepsilon^-)$. Then,
\begin{align*}
\lag \AA h , \phi \rag_{L^2(\Omega_\varepsilon^-)} &=
\int_{-1}^{-\varepsilon} \int_0^1 \AA h (x,y) \phi(x,y) \md y \md x
\\
&= - \sum_{n \in \N^*} \lag A_n h_n , \phi_n \rag_{L^2(-1,-\varepsilon)}
\\
&= - \sum_{n \in \N^*} \lag h_n , A_n \phi_n \rag_{L^2(-1,-\varepsilon)}
\\
&= \lag h , \left(\partial^2_{xx} + |x|^{2 \gamma} \partial^2_{yy} - \frac{c_\nu}{x^2} \right) \phi \rag_{L^2(\Omega_\varepsilon^-)}.
\end{align*}
Thus, $h \in D(\AA)$ implies that
\begin{equation*}
\AA h \overset{\mathcal{D}'(\Omega_\varepsilon^-)}{=} \left(\partial^2_{xx} + |x|^{2 \gamma} \partial^2_{yy} - \frac{c_\nu}{x^2} \right) h.
\end{equation*}

As $h \in D(\AA)$, this equality also holds in $L^2(\Omega_\varepsilon^-)$. In particular, this implies that
\begin{equation*}
\partial^2_{xx} h + |x|^{2 \gamma} \partial^2_{yy} h \in L^2(\Omega_\varepsilon^-),
\end{equation*}
and also that $\AA$ is uniformly elliptic on $\Omega_\varepsilon^-$. 
Thus, using classical unique continuation  results for uniformly parabolic operators with variable coefficients (see e.g. \cite[Theorem 1.1]{SautScheurer87}), it comes that $S(t) g^0 =0$ for every $t \in (0,T]$ in $L^2(\Omega_\varepsilon^-)$. Then, it comes that $S(t) g^0 =0$ for every $t \in (0,T]$ in $L^2(\Omega_0^-)$. If, for any $n \in \N^*$, we decompose $g_n$ in regular and singular part (as defined in (\ref{def_D(A)})) we get
\begin{equation} \label{nulle_1er_cote_singulier}
c_1^-(g_n(t)) = c_2^-(g_n(t)) = 0, \quad \forall t \in (0,T),
\end{equation}
\begin{equation} \label{nulle_1er_cote_regulier}
g_{n,r}(t,x) = 0, \quad \forall (t,x) \in (0,T) \times (-1,0).
\end{equation}
Using the transmission conditions in (\ref{def_D(A)}), it also comes that $c_1^+(g_n(t)) = c_2^+(g_n(t)) =0$ and thus the singular part is identically zero on $(0,T) \times (-1,1)$. This ends the proof of Proposition~\ref{prop_uc_1er_cote}.

\end{proof}

\begin{rmq} \label{rk_controle_deux_cotes}
Notice that Proposition \ref{prop_uc_1er_cote} proves that if $\omega$ intersects both connected components of $\Omega \backslash \{ x=0\}$, then unique continuation from $\omega$ hold for any $\nu \in (0,1)$.
\end{rmq}
\medskip

Proposition~\ref{prop_uc_1er_cote} implies that if $\chi_\omega S(t) g^0$ is identically zero then for any $n \in \N^*$, $g_n \in C^0((0,T],H^2 \cap H^1_0 (0,1)) \cap C^1((0,T],L^2(0,1))$ is solution of
\begin{equation} \label{grushin_1d_1cote}
\left\{
\begin{aligned}
&\partial_t g_n - \partial^2_{xx} g_n + \left( \frac{c_\nu}{x^2} + (n \pi)^2 x^{2 \gamma} \right) g_n = 0, \, &(t,x)& \in (0,T) \times (0,1),
\\
& g_n(t,0) = g_n(t,1) = 0, \, &t& \in (0,T),
\\
& \partial_x g_n(t,0) = 0, \, &t& \in (0,T).
\end{aligned}
\right.
\end{equation}
We prove in Sect.~\ref{subsect_carleman} that this leads to $g_n \equiv 0$ using a suitable Carleman estimate.

\subsection{Carleman estimate}
\label{subsect_carleman}

This subsection is dedicated to the proof of the following Carleman type inequality. 

\begin{prop} \label{prop_Carleman}
Let $T>0$ and $Q_T := (0,T) \times (0,1)$. 
There exist $R_0, C_0 > 0$ such that for any $R \geq R_0$, for every $\gamma >0$, $\nu \in (0,1)$ and $n \in \N^*$, any $g \in C^1((0,T], L^2(0,1)) \cap C^0((0,T], H^2 \cap H^1_0(0,1))$ with $\partial_x g(t,0) \equiv 0$ on $(0,T)$ satisfies
\begin{align*} 
&C_0 R^3 \iint_{Q_T} \frac{1}{(t(T-t))^3} \exp \left( \frac{-2 R x^b}{t(T-t)} \right) g^2(t,x) \md x \md t 
\\
&\leq \iint_{Q_T} | \PP_{n,\nu,\gamma} g(t,x)|^2 \exp \left( \frac{-2 R x^b}{t(T-t)} \right) \md x \md t,
\tag{\theequation} \addtocounter{equation}{1}
\label{Carleman}
\end{align*}
where
\begin{equation*}
\PP_{n,\nu,\gamma} := \partial_t  - \partial^2_{xx}  + \left( \frac{c_\nu}{x^2} + (n \pi)^2 x^{2 \gamma} \right),
\end{equation*}
and $b$ satisfies 
\begin{equation} \label{condition_b}
\left\{ 
\begin{aligned}
&\text{ if } \nu \in \left( 0 ,\tfrac{1}{2} \right], &b& \in (0,1),
\\
&\text{ if } \nu \in \left( \tfrac{1}{2}, 1 \right),  &b& := 2 - 2 \nu \in (0,1).
\end{aligned}
\right.
\end{equation}
\end{prop}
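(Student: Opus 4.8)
The plan is to prove the Carleman estimate (\ref{Carleman}) by the classical conjugation method: introduce the weight $\sigma(t,x) := R x^b / (t(T-t))$, set $w := e^{-\sigma} g$, and split the conjugated operator $e^{-\sigma} \PP_{n,\nu,\gamma}(e^{\sigma} w)$ into its formally self-adjoint and skew-adjoint parts. Integrating the cross term $\iint P_s w \cdot P_a w$ over $Q_T$ produces, after integration by parts, a sum of volume terms (the dominant one being the positive $R^3$ contribution weighted by $(t(T-t))^{-3}$) plus boundary terms at $x=0$, $x=1$ and $t \in \{0,T\}$. The temporal boundary terms vanish because the weight forces $e^{-\sigma} \to 0$ as $t \to 0^+, T^-$, and the estimate is stated with $C^1((0,T],\cdot)$ regularity precisely so these are controlled. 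The goal is to absorb the lower-order terms — in particular the singular potential $c_\nu/x^2$ and the degenerate term $(n\pi)^2 x^{2\gamma}$ — into the positive volume term for $R$ large enough.

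First I would compute the standard Laplacian contribution, which for a weight of the form $R\psi/(t(T-t))$ with $\psi = x^b$ yields a leading positive term proportional to $R^3 (\psi')^2 \psi / (t(T-t))^3$ times $w^2$; since $\psi' = b x^{b-1}$, one gets $(\psi')^2\psi = b^3 x^{3b-3}$, and the crucial algebraic point is to choose $b$ so that this behaves no worse than $x^{-3}$ near the singularity, matching the weight on the left-hand side. The exponent $b \in (0,1)$ guarantees $3b-3 < 0$, and the bound $(t(T-t))^{-3}$ on the left is exactly what the cube of the temporal part of the weight produces. I would then handle the singular potential term $c_\nu \iint x^{-2} (\cdots) w^2$: for $c_\nu \geq 0$ it has a favourable sign, while for $c_\nu < 0$ (i.e. $\nu < 1/2$) it must be dominated, and here the \emph{Hardy inequality} (\ref{Hardy_1d_(-1,1)}) enters, applied to the conjugated function using the boundary conditions $w(t,0)=0$ and $\partial_x g(t,0)=0$ inherited from (\ref{grushin_1d_1cote}).

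The main obstacle, and the reason for the two-regime choice of $b$ in (\ref{condition_b}), is the interplay between the singular weight near $x=0$ and the boundary terms generated at $x=0$ by integration by parts. When $\nu \in (1/2,1)$ the potential is strongly singular and a generic choice of $b$ would leave an uncontrolled boundary contribution or an inner term scaling like $x^{-2-\epsilon}$ that the leading $R^3$ term cannot absorb; forcing $b = 2-2\nu$ tunes the weight's decay so that the singular boundary terms at $0^+$ either cancel or acquire the correct sign, using precisely the vanishing $\partial_x g(t,0)=0$ established in Proposition~\ref{prop_uc_1er_cote}. I would verify that with this calibration the Hardy inequality, applied with the sharp constant $4$, closes the estimate: the combination $(1+4c_\nu)\int (\partial_x w)^2 \geq 4\nu^2 \int(\partial_x w)^2$ reproduces the coercivity mechanism already seen in the positivity step (\ref{positivite_A}) of Proposition~\ref{prop_auto_adjoint}, transported to the weighted setting.

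Finally I would check that the degenerate term $(n\pi)^2 x^{2\gamma}$ causes no difficulty: since $\gamma>0$ it is a bounded, nonnegative multiplier on $(0,1)$, so after conjugation it contributes a term that is either absorbed directly (sign) or dominated by the $R^3$ term uniformly in $n$ and $\gamma$ — which is why the constants $R_0, C_0$ in the statement can be taken independent of $n$, $\nu$ and $\gamma$. Having established (\ref{Carleman}), I would conclude the proof of Theorem~\ref{th_continuation_unique} by applying it to each Fourier component $g_n$ solving (\ref{grushin_1d_1cote}): since $\PP_{n,\nu,\gamma} g_n = 0$, the right-hand side vanishes, forcing $g_n \equiv 0$ on $Q_T$, and combined with Proposition~\ref{prop_uc_1er_cote} this yields $g^0 = 0$.
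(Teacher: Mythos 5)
Your overall architecture (conjugation by $e^{-R\sigma}$ with $\sigma=\theta(t)x^b$, splitting into self-adjoint and skew-adjoint parts, domination by the $R^3$ volume term) matches the paper's proof, but the treatment of the singular potential contains a genuine gap. After integration by parts the singular potential does not appear as $c_\nu\iint x^{-2}(\cdots)z^2$: it appears through its derivative multiplied by $\sigma_x$, namely $-2Rc_\nu\iint \sigma_x x^{-3}z^2\sim \iint x^{b-4}z^2$, together with $\frac{R}{2}\iint\sigma_{xxxx}z^2\sim\iint x^{b-4}z^2$ and $-2R\iint\sigma_{xx}z_x^2\sim\iint x^{b-2}z_x^2$. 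These are far more singular than $x^{-2}z^2$, so the plain Hardy inequality (\ref{Hardy_1d_(-1,1)}) with constant $4$ that you invoke cannot close the estimate; the exponents do not match. The paper instead proves and uses a \emph{weighted} Hardy inequality (Proposition~\ref{Prop:Hardy}), $\frac{(1-\alpha)^2}{4}\int_0^1 x^{\alpha-2}z^2\leq\int_0^1 x^{\alpha}z_x^2$, valid down to $\alpha\in[-2,2)$ precisely because of the extra information $z_x(t,0)=0$, and applies it with $\alpha=b-2$. The resulting algebraic condition for the singular block to be nonnegative is $(1-b)(3-b)-4c_\nu\geq 0$, and this volume-term condition --- not any boundary term at $x=0$ --- is what forces $b=2-2\nu$ when $\nu\in(\tfrac12,1)$. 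In the paper all boundary terms at $x=0$ vanish for every $b\in(0,1)$ via the limits (\ref{limites_z}), so your explanation of the two-regime choice of $b$ points at the wrong mechanism.

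Your sign analysis is also reversed for this setting: since $\left(\frac{c_\nu}{x^2}\right)_x\sigma_x=-2c_\nu\sigma_x x^{-3}$ with $\sigma_x\geq 0$, the singular contribution is favourable when $c_\nu\leq 0$ (i.e.\ $\nu\leq\tfrac12$, where any $b\in(0,1)$ works) and must be dominated when $c_\nu>0$ (i.e.\ $\nu\in(\tfrac12,1)$, where $b=2-2\nu$ makes $(1-b)(3-b)-4c_\nu$ vanish exactly). The coercivity computation $(1+4c_\nu)\geq 4\nu^2$ you quote belongs to the positivity step of Proposition~\ref{prop_auto_adjoint} and does not transport to the weighted Carleman setting. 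The remaining ingredients of your plan (vanishing of temporal boundary terms, nonnegativity of the boundary term at $x=1$, sign of the degeneracy term $2\gamma(n\pi)^2x^{2\gamma-1}\sigma_x z^2\geq 0$, uniformity in $n$, and the deduction of Theorem~\ref{th_continuation_unique}) are consistent with the paper.
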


\noindent
Before proving Proposition \ref{prop_Carleman} we show that it ends the proof of Theorem~\ref{th_continuation_unique}. Let $g^0 \in L^2(\Omega)$ be such that $\chi_\omega S(t) g^0 \equiv 0$. Using Proposition \ref{prop_uc_1er_cote} and the final comment of Sect.~\ref{subsect_uc_1}, it comes that for any $n \in \N^*$, $g_n \in C^1((0,T], L^2(0,1)) \cap C^0((0,T], H^2 \cap H^1_0(0,1))$ with $\partial_x g_n(t,0) \equiv 0$ on $(0,T)$. As, $g_n$ is solution of (\ref{grushin_1d_1cote}), it comes that $\PP_{n,\nu,\gamma} g_n \equiv 0$ on $(0,T) \times (0,1)$. Then, Proposition \ref{prop_Carleman} implies that $g_n \equiv 0$ and thus, as $g_n \in C^0([0,T], L^2(0,1))$, we recover $g^0 = 0$.

\begin{rmq} 
Contrarily to Carleman estimates proved by Vancostenoble \cite{Vancostenoble11}, there are no boundary terms in the right-hand side of the inequality. Actually, the homogeneous Neumann boundary condition at $x=0$  is crucial for inequality (\ref{Carleman}) to hold.
\end{rmq}
\smallskip

The proof will rely on the following Hardy type inequality.
\begin{prop} \label{Prop:Hardy}
For any $z \in H^2 \cap H^1_0 (0,1)$ with $z'(0) = 0$,
\begin{equation*}
\frac{(1-\alpha)^2}{4} \int_0^1 x^{\alpha-2} z(x)^2 \md x \leq \int_0^1 x^\alpha z'(x)^2 \md x  < \infty, 
\quad \forall \alpha \in [-2, 2).
\end{equation*}
\end{prop}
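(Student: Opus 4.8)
The plan is to prove the weighted inequality by the classical Hardy scheme: integrate $\int_0^1 x^{\alpha-2} z^2 \, \md x$ by parts to trade a power of $x$ against a derivative, then close the estimate with Cauchy--Schwarz. Throughout one may assume $\alpha \neq 1$, since for $\alpha = 1$ the left-hand coefficient $(1-\alpha)^2/4$ vanishes and the inequality is trivial. To make the endpoint manipulations legitimate, I would work first on a truncated interval $(\delta, 1)$ and let $\delta \to 0$ only at the end.

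The first genuine task is the finiteness $\int_0^1 x^\alpha z'(x)^2 \, \md x < \infty$, which is the only assertion that is really sensitive to $\alpha$ at the lower end. For $\alpha \in [0,2)$ it is immediate, since $x^\alpha \leq 1$ and $z' \in L^2(0,1)$. For $\alpha \in (-2,0)$, I would use $z'(0) = 0$ to write $z'(x) = \int_0^x z''$, so that $z'(x)^2 \leq x \|z''\|_{L^2}^2$ by Cauchy--Schwarz and $x^\alpha z'(x)^2 \leq x^{\alpha+1} \|z''\|_{L^2}^2$ is integrable because $\alpha + 1 > -1$. The borderline case $\alpha = -2$ is precisely where the Neumann condition is indispensable: since $z \in H^2$ forces $z' \in H^1(0,1)$ with $z'(0) = 0$, the classical Hardy inequality (\ref{Hardy_1d_(0,1)}) applied to $z'$ gives $\int_0^1 x^{-2} z'(x)^2 \, \md x \leq 4 \int_0^1 z''(x)^2 \, \md x < \infty$.

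For the main inequality, on $(\delta,1)$ I would integrate by parts using $x^{\alpha-2} = \frac{\md}{\md x}\big( \frac{x^{\alpha-1}}{\alpha-1} \big)$:
\begin{equation*}
\int_\delta^1 x^{\alpha-2} z(x)^2 \, \md x = \frac{1}{\alpha-1} \Big[ x^{\alpha-1} z^2 \Big]_\delta^1 - \frac{2}{\alpha-1} \int_\delta^1 x^{\alpha-1} z(x) z'(x) \, \md x.
\end{equation*}
The boundary term at $x=1$ vanishes since $z(1) = 0$, and the one at $x = \delta$ tends to $0$: by Lemma~\ref{lemme_fr_en_0}, $z(\delta)^2 = o(\delta^3)$, so $\delta^{\alpha-1} z(\delta)^2 = o(\delta^{\alpha+2})$ with $\alpha + 2 \geq 0$ on $[-2,2)$. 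Writing $x^{\alpha-1} z z' = (x^{(\alpha-2)/2} z)(x^{\alpha/2} z')$ and applying Cauchy--Schwarz, then setting $I_\delta := \int_\delta^1 x^{\alpha-2} z^2 \, \md x$ and bounding $\int_\delta^1 x^\alpha z'^2 \, \md x \leq J := \int_0^1 x^\alpha z'(x)^2 \, \md x$ (finite by the previous step), I obtain as $\delta \to 0$
\begin{equation*}
I_\delta \leq o(1) + \frac{2}{|1-\alpha|} \sqrt{I_\delta} \sqrt{J}.
\end{equation*}
Solving this quadratic inequality in $\sqrt{I_\delta}$ and letting $\delta \to 0$ yields simultaneously $\int_0^1 x^{\alpha-2} z^2 \, \md x < \infty$ and the claimed bound $\frac{(1-\alpha)^2}{4} \int_0^1 x^{\alpha-2} z^2 \, \md x \leq J$.

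The delicate points are the vanishing of the singular boundary term and the finiteness at the critical exponent $\alpha = -2$; both rest entirely on the hypothesis $z'(0) = 0$ (together with $z(0) = 0$), through the refined decay of Lemma~\ref{lemme_fr_en_0} and through the applicability of (\ref{Hardy_1d_(0,1)}) to $z'$. This is exactly consistent with the observation that the homogeneous Neumann condition at $x=0$ is what makes the estimate work.
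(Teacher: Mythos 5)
Your proof is correct, and it rests on the same classical Hardy scheme as the paper's --- the central identity in both is the integration by parts of the cross term $\int x^{\alpha-1} z z'\,\md x$, with the boundary contribution at $0$ killed by the decay $z(x)=o(x^{3/2})$ from Lemma~\ref{lemme_fr_en_0} --- but the execution differs in two respects. First, the paper treats $\alpha\in[0,2)$ by citing the generalized Hardy inequality of \cite{CannarsaMartinezVancostenoble08} and only runs its own argument for $\alpha\in[-2,0)$, whereas you handle the whole range $[-2,2)$ uniformly and self-containedly. Second, for the main estimate the paper completes a square, expanding $0\le\int_0^1\left(x^{\alpha/2}z'+cx^{(\alpha-2)/2}z\right)^2\md x$ and optimizing over $c$ (the choice $c=\tfrac12(\alpha-1)$), while you truncate to $(\delta,1)$, apply Cauchy--Schwarz and solve a quadratic inequality in $\sqrt{I_\delta}$. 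These two computations are algebraically equivalent (optimizing $c$ is Cauchy--Schwarz in disguise), but your truncation has the merit of delivering the finiteness of $\int_0^1 x^{\alpha-2}z^2\,\md x$ as an output of the argument rather than needing it, implicitly, to justify expanding the square. Your finiteness argument for $J=\int_0^1 x^\alpha z'^2\,\md x$ is also more elementary: the pointwise bound $|z'(x)|\le\sqrt{x}\,\|z''\|_{L^2}$ for $\alpha\in(-2,0)$ bypasses the weighted Hardy inequality for $z'$ used in the paper, whose constant $(\alpha+1)^2/4$ degenerates at $\alpha=-1$; only the endpoint $\alpha=-2$ genuinely requires Hardy applied to $z'$, exactly as you observe.
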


The statement and the proof are classical (see for example \cite[Theorem 2.1]{Vancostenoble11}). The main novelty here is that due to the extra information $z'(0)=0$, we can prove the Hardy inequality with singular potential up to $\alpha \geq -2$.
\begin{proof}[Proof of Proposition~\ref{Prop:Hardy}]
Applying the generalized Hardy inequality \cite{CannarsaMartinezVancostenoble08} we get Proposition~\ref{Prop:Hardy} for $\alpha \in [0,2)$. Let $\alpha \in [-2,0)$. Applying this generalized Hardy inequality to $z'$, we have
\begin{equation*}
\frac{(\alpha+1)^2}{4} \int_0^1 x^\alpha z'(x)^2 \md x \leq  \int_0^1 x^{\alpha+2} z''(x)^2 \md x < \infty.
\end{equation*}
For any $c \in \R$,
\begin{align*}
0 &\leq \int_0^1 \left( x^{\frac{\alpha}{2}} z'(x)  + c x^{\frac{\alpha-2}{2}} z(x) \right)^2 \md x
\\
&= \int_0^1 x^\alpha z'(x)^2  + c^2 x^{\alpha-2} z(x)^2 + c x^{\alpha-1} (z^2)'(x) \md x.
\end{align*}
From Lemma~\ref{lemme_fr_en_0}, integration by parts lead to 
\begin{equation*}
\int_0^1  x^{\alpha -1} (z^2)'(x) \md x = - \int_0^1 (\alpha - 1) x^{\alpha-2} z(x)^2 \md x.
\end{equation*}
Thus, for any $c \in \R$,
\begin{equation*} 
\int_0^1 x^\alpha z'(x)^2 \md x \geq \left( c (\alpha-1) - c^2 \right) \int_0^1 x^{\alpha-2} z^2 \md x.
\end{equation*}
Choosing $c = \tfrac{1}{2} (\alpha-1)$ ends the proof of Proposition~\ref{Prop:Hardy}.

\end{proof}

We set some notations that will used throughout the proof.

\noindent
Let $\theta : t \in (0,T) \mapsto \frac{1}{t(T-t)}$. Let $\sigma (t,x) := \theta(t) x^b$ where $b$ satisfies (\ref{condition_b}). Notice that as $b \in (0,1)$ every space derivative of the weight function is singular at $x=0$. This will be useful to handle the singular potential. The idea of using such a weight is inspired by~\cite{CannarsaTortYamamoto12}. In Remark~\ref{rk_poids_Carleman2}, we give further comments on this choice of weight function. To simplify the notations, we denote the partial derivatives by subscripts: $g_x$ stands for $\partial_x g$.

\begin{proof}[Proof of Proposition \ref{prop_Carleman}]
We set for $R > 0$,
\begin{equation} \label{def_z}
z(t,x) := e^{-R \sigma(t,x)} g(t,x).
\end{equation}
From the definition of $\sigma$ we get that, for any $x \in (0,1)$, $z(0,x) = z(T,x) = z_t(0,x) = z_t(T,x) = 0$. The boundary conditions on $g$ also imply that for any $t \in(0,T)$, $z(t,0) = z(t,1) = z_x(t,0) = 0$.

By Proposition~\ref{Prop:Hardy}, these boundary conditions imply that $x \mapsto\frac{z(t,x)}{x^2} \in L^2(0,1)$, $x \mapsto \frac{z_x(t,x)}{x} \in L^2(0,1)$ and using Lemma~\ref{lemme_fr_en_0} we get
\begin{equation} \label{limites_z}
\frac{z(t,x)}{x^{\frac{3}{2}}} \underset{x \to 0}{\longrightarrow} 0, \qquad
\frac{z_x(t,x)}{x^{\frac{1}{2}}} \underset{x \to 0}{\longrightarrow} 0.
\end{equation}

\medskip
\noindent
Straightforward computations lead to $e^{-R\sigma} \PP_{n,\nu,\gamma} g = P_R^+ z + P_R^- z$ where 
\begin{align*}
P_R^+ z :&= (R \sigma_t - R^2 \sigma_x^2) z - z_{xx} + \left( \frac{c_\nu}{x^2} + (n \pi)^2 x^{2 \gamma} \right) z,
\\
P_R^- z :&= z_t - 2 R \sigma_x z_x - R \sigma_{xx} z.
\end{align*}
Then,
\begin{equation} \label{Carleman_lien_z_g}
\iint_{Q_T} P_R^+ z  P_R^- z  \md x \md t \leq \frac{1}{2} \iint_{Q_T} e^{-2 R \sigma} | \PP_{n,\nu,\gamma} g|^2 \md x \md t.
\end{equation}
The rest of the proof follows the classical Carleman strategy \cite{Imanuvilov93} (see \cite{CoronBook} for a pedagogical presentation). We just pay attention to the singular terms.

\medskip
\textit{First step : integrations by part lead to}
\begin{align*}
&\frac{1}{2} \iint_{Q_T} e^{-2 R \sigma} |\PP_{n,\nu,\gamma} g|^2 \md x \md t 
\geq \iint_{Q_T} P_R^+ z  P_R^- z  \md x \md t 
\\
&= R \int_0^T \sigma_x(t,1) z_x^2(t,1) \md t - 2 R \iint_{Q_T} \sigma_{xx} z_x^2 \md x \md t
\\
&+ \iint_{Q_T} \left( -\frac{R}{2} \sigma_{tt} + 2 R^2 \sigma_x \sigma_{xt} - 2 R^3 \sigma_x^2 \sigma_{xx} + \frac{R}{2} \sigma_{xxxx} \right) z^2 \md x \md t
\\
&+ R \iint_{Q_T} \left( -2\frac{c_\nu}{x^3} + 2 \gamma (n \pi)^2  x^{2 \gamma-1} \right) \sigma_x z^2 \md x \md t.
\label{Carleman_IPP}
\tag{\theequation} \addtocounter{equation}{1}
\end{align*}

\noindent
Performing integrations by parts, it is easily seen that $\lag P_R^+z , P_R^-z\rag = I_1 + \dots + I_5$, where
\begin{equation*}
I_1 := \lag (R \sigma_t - R^2 \sigma_x^2) z - z_{xx} , z_t \rag
= \iint_{Q_T}  \Big( -\frac{R}{2}  \sigma_{tt} + R^2 \sigma_x \sigma_{xt} \Big) z^2 \md x \md t,
\end{equation*}

\begin{align*}
I_2 :&= -R^2 \lag \sigma_t z , 2 \sigma_x z_x + \sigma_{xx} z \rag
\\
&= -R^2 \int_0^T \left[ \sigma_t \sigma_x z^2 \right]_0^1 \md t + R^2 \iint_{Q_T} \sigma_{xt} \sigma_x  z^2 \md x \md t,
\end{align*}

\begin{align*}
I_3 :&= R^3 \lag \sigma_x ^2 z , 2 \sigma_x z_x + \sigma_{xx} z \rag
\\
&= R^3 \int_0^T \left[ \sigma_x^3 z^2 \right]_0^1 \md t - R^3 \iint_{Q_T} 2 \sigma_x^2 \sigma_{xx} z^2 \md x \md t,
\end{align*}

\begin{align*}
I_4 :&= R \lag z_{xx} , 2 \sigma_x z_x + \sigma_{xx} z \rag
\\
&= R \int_0^T \left[ \sigma_x z_x^2 \right]_0^1 \md t - R \iint_{Q_T}  \sigma_{xx} z_x^2 \md x \md t 
+ R \int_0^T \left[ \sigma_{xx} z z_x \right]_0^1 \md t 
\\
&- R \iint_{Q_T} \left( \sigma_{xx} z_x^2 + \sigma_{xxx} z z_x \right) \md x \md t
\\
&= R \int_0^T \left( \left[ \sigma_x z_x^2 \right]_0^1 + \left[ \sigma_{xx} z z_x \right]_0^1 
-  \left[ \tfrac{1}{2} \sigma_{xxx} z^2 \right]_0^1 \right) \md t
\\
& + R \iint_{Q_T} \left( \frac{1}{2} \sigma_{xxxx} z^2 - 2\sigma_{xx} z_x^2 \right)  \md x \md t.
\end{align*}
and
\begin{align*}
I_5 :&= \lag \left( \frac{c_\nu}{x^2} + (n \pi)^2 x^{2 \gamma} \right) z , z_t - 2R \sigma_x z_x - R \sigma_{xx} z \rag	
\\
 &= - R \int_0^T \left[ \left( \frac{c_\nu}{x^2} + (n \pi)^2 x^{2 \gamma} \right) \sigma_x z^2 \right]_0^1 \md t 
\\
&+ R \iint_{Q_T} \left( \frac{c_\nu}{x^2} + (n \pi)^2 x^{2 \gamma} \right)_x \sigma_x z^2 \md x \md t.
\end{align*}
Summing these terms and using (\ref{Carleman_lien_z_g}) leads to 
\begin{align*}
&\frac{1}{2} \iint_{Q_T} e^{-2 R \sigma} |\PP_{n,\nu,\gamma} g|^2 \md x \md t 
\geq \iint_{Q_T} P_R^+ z  P_R^- z  \md x \md t 
\\
&= -R^2 \int_0^T \left[ \sigma_t \sigma_x z^2 \right]_0^1 \md t 
+ R^3 \int_0^T \left[ \sigma_x^3 z^2 \right]_0^1 \md t
+ R \int_0^T \left[ \sigma_x z_x^2 \right]_0^1 \md t 
\\
&+ R \int_0^T \left[ \sigma_{xx} z z_x \right]_0^1 \md t
- R \int_0^T \left[ \tfrac{1}{2} \sigma_{xxx} z^2 \right]_0^1 \md t
\\
&- R \int_0^T \left[ \left( \frac{c_\nu}{x^2} + (2 n \pi)^2 |x|^{2 \gamma} \right) \sigma_x z^2 \right]_0^1 \md t
- 2 R \iint_{Q_T} \sigma_{xx} z_x^2 \md x \md t
\\
&+ \iint_{Q_T} \left( -\frac{R}{2} \sigma_{tt} + 2 R^2 \sigma_x \sigma_{xt} - 2 R^3 \sigma_x^2 \sigma_{xx} + \frac{R}{2} \sigma_{xxxx} \right) z^2 \md x \md t
\\
&+ R \iint_{Q_T} \left( -2\frac{c_\nu}{x^3} + 2 \gamma (n \pi)^2  x^{2 \gamma-1} \right) \sigma_x z^2 \md x \md t.
\end{align*}
The weight being regular at $x=1$ and $z(t,1)=0$ every boundary term at $x=1$ vanishes except
\begin{equation*}
R \int_0^T \sigma_x(t,1) z_x^2(t,1) \md t.
\end{equation*}
Using (\ref{limites_z}) and $b>0$ we get that every boundary term at $x=0$ vanish. For example
\begin{equation*}
\sigma_{xxx} z^2 = b (b-1) (b-2)  x^b \, \frac{z^2(x)}{x^3}   \underset{x \to 0}{\longrightarrow} 0.
\end{equation*}
Thus, we get (\ref{Carleman_IPP}).

\bigskip
\textit{Second step : lower bounds on the right-hand side of (\ref{Carleman_IPP}).}
Recall that $\sigma(t,x) = \theta(t) x^b$ with $b$ satisfying (\ref{condition_b}). 

\medskip
\textbf{Boundary term.}
As $b >0$, we have $\sigma_x(t,1) > 0$ and thus
\begin{equation} \label{Carleman_bord}
R \int_0^T \sigma_x(t,1) z_x^2(t,1) \md t \geq 0.
\end{equation}

\smallskip
\textbf{Potential coming from the degeneracy.}
As $\sigma_x(t,x) = b \theta(t) x^{b-1} \geq 0$ on $Q_T$ and $\int_0^1 \frac{z^2}{x^2} \md x < \infty$, it comes that
\begin{equation} \label{Carleman_potentiel_n}
0 \leq R \iint_{Q_T} 2 \gamma (n \pi)^2 x^{2 \gamma-1} \sigma_x z^2 \md x \md t < \infty.
\end{equation}

\smallskip
\textbf{Regular term.}
Let 
\begin{equation*}
I_r := \iint_{Q_T} \left( -\tfrac{R}{2} \sigma_{tt} + 2 R^2 \sigma_x \sigma_{xt} - 2 R^3 \sigma_x^2 \sigma_{xx} \right) z^2 \md x \md t.
\end{equation*}
We prove that for $R$ large enough the leading term in $I_r$ is the one in $R^3$.
From straightforward computations we have
\begin{equation} \label{Carleman_reg1}
-2 R^3 \iint_{Q_T} \sigma_x^2 \sigma_{xx} z^2 \md x \md t = 2 b^3 (1-b) R^3 \iint_{Q_T} \theta^3 x^{3b -4} z^2 \md x \md t.
\end{equation}
Notice that this term is non-negative as $b  \in (0,1)$. Classical computations imply that
\begin{equation*}
|\theta_{tt}(t)| + |\theta(t) \theta_t(t)| \leq C \theta^3(t), \quad \forall t \in (0,T).
\end{equation*}
Here and in the following, $C$ denotes a positive constant that may vary each time it appears. Thus,
\begin{equation*}
\left| \iint_{Q_T} \left(-\tfrac{R}{2} \sigma_{tt} + 2 R^2 \sigma_x \sigma_{xt} \right) z^2 \md x \md t \right|
\leq C \iint_{Q_T} \theta^3 \left( R^2 x^{2 b -2} + R x^b \right) z^2 \md x \md t.
\end{equation*}
As $b \in (0,1)$, for every $x \in (0,1)$, $x^{2b-2} \leq x^{3b-4}$ and $x^b \leq x^{3b-4}$. Hence, as soon as $R \geq 1$,
\begin{equation*}
\left| \iint_{Q_T} \left(-\tfrac{R}{2} \sigma_{tt} + 2 R^2 \sigma_x \sigma_{xt} \right) z^2 \md x \md t \right|
\leq C R^2 \iint_{Q_T} \theta^3 x^{3b-4} z^2 \md x \md t.
\end{equation*}
Together, with (\ref{Carleman_reg1}), we get
\begin{equation*}
I_r \geq C (R^3 -R^2) \iint_{Q_T} \theta^3 x^{3b-4} z^2 \md x \md t.
\end{equation*}
Using, $x^{3b-4} \geq 1$ on $(0,1)$, we get the existence of $C_0$ and $R_0$ positive constants such that for $R \geq R_0$,
\begin{equation} \label{Carleman_reg}
I_r \geq C_0 R^3 \iint_{Q_T} \theta^3 z^2 \md x \md t.
\end{equation}

\smallskip
\textbf{Singular potential.} 
Let 
\begin{equation*}
I_s := -2 R \iint_{Q_T} \sigma_{xx} z_x^2 \md x \md t + R \iint_{Q_T} \left( \frac{1}{2} \sigma_{xxxx} - 2 \frac{c_\nu \sigma_x}{x^3} \right) z^2 \md x \md t.
\end{equation*}
Notice that the two singular potentials are of the same order. Indeed,
\begin{equation*}
\frac{1}{2} \sigma_{xxxx} - 2 \frac{c_\nu \sigma_x}{x^3} =  \frac{b}{2} \left( (b-1) (b-2) (b-3) - 4 c_\nu \right) x^{b-4}.
\end{equation*}
We prove that 
\begin{equation} \label{Carleman_sing}
I_s \geq 0.
\end{equation}
From the Hardy's inequality given in Proposition~\ref{Prop:Hardy}, with $\alpha := b-2$, we get
\begin{align*}
I_s &= 
2R b (1-b) \int_0^T \theta \int_0^1 x^{b-2} z_x^2 \md x \md t 
\\
&+ \frac{b}{2} R \int_0^T \theta \int_0^1 \left( (b-1) (b-2) (b-3) - 4 c_\nu \right) x^{b-4} z^2 \md x \md t
\\
&\geq \frac{b}{2} R \int_0^T \theta \int_0^1 \left( (1-b)(b-3)^2 - (1-b) (b-2) (b-3) - 4 c_\nu \right) x^{b-4} z^2 \md x \md t
\\
&=\frac{b}{2} R \int_0^T \theta \int_0^1 \left( (1-b)(3-b) - 4 c_\nu \right) x^{b-4} z^2 \md x \md t
\end{align*}
Recall that $c_\nu = \nu^2 - \frac{1}{4}$. Thus, if $\nu \in \left( 0, \frac{1}{2} \right]$ we have $c_\nu \leq 0$ and then
\begin{equation*}
(1-b)(3-b) - 4 c_\nu \geq (1-b) (3-b) \geq 0,
\end{equation*}
for any $b \in (0,1)$. This gives (\ref{Carleman_sing}).

If $\nu \in \left( \frac{1}{2}, 1 \right)$, setting $b = 2- 2 \nu$ we still have $b \in (0,1)$ and 
\begin{equation*}
(1-b)(3-b) - 4 c_\nu = 0.
\end{equation*}
This gives (\ref{Carleman_sing}).

\smallskip
\textbf{Conclusion.} Gathering (\ref{Carleman_bord}), (\ref{Carleman_potentiel_n}), (\ref{Carleman_reg}) and (\ref{Carleman_sing}) in (\ref{Carleman_IPP}) we get that for $R \geq R_0$,
\begin{equation*}
\frac{1}{2} \iint_{Q_T} e^{-2 R \sigma} |\PP_{n,\nu,\gamma} g|^2 \md x \md t  
\geq C_0 R^3 \iint_{Q_T} \theta^3 z^2 \md x \md t.
\end{equation*}
This ends the proof of Proposition~\ref{prop_Carleman}.

\end{proof}

\begin{rmq} \label{rk_poids_Carleman2}
We here point out some of the differences between Proposition~\ref{prop_Carleman} and the Carleman estimates established in the case of a boundary inverse square singularity in \cite{VancostenobleZuazua, Vancostenoble11}. 
In both estimates the singular potential appears as 
\begin{equation*}
\iint_{Q_T} \frac{\sigma_x}{x^3} z^2 \md x \md t.
\end{equation*}
In \cite{VancostenobleZuazua}, the weight is defined by $p(x) = 1- \frac{x^2}{2}$. Thus, the singular potential can be treated with some classical Hardy type inequalities. 

In our situation, using the extra information $z_x(t,0)=0$, we are able to deal with a weight function with singular derivatives. The weight is chosen concave so that the term in $R^3$ is the leading one. The weight is chosen increasing to deal with the boundary term $\sigma_x(t,1) z_x^2(t,1)$. At the same time, this allows to deal with the potential coming from the degeneracy (\ref{Carleman_potentiel_n}). 
The price to pay is that we have to handle very singular terms of the form
\begin{equation*}
\iint_{Q_T} \theta x^b \frac{z^2}{x^4} \md x \md t.
\end{equation*}
Thus the Carleman estimate stated in Proposition~\ref{prop_Carleman} only holds because of the extra information $z_x(t,0)=0$.
\end{rmq}
\medskip

\begin{rmq} \label{rk_conditions_periodiques}
To be closer to the setting studied by Boscain and Laurent \cite{BoscainLaurent}, we could study for $(t,x,y) \in (0,T) \times (-1,1) \times \T$,
\begin{equation} \label{grushin_2d_tore}
\left\{
\begin{aligned}
& \partial_t f - \partial^2_{xx} f - |x|^{2 \gamma} \partial^2_{yy} f + \frac{\gamma}{2} \left( \frac{\gamma}{2} + 1 \right) \frac{1}{x^2} f = u(t,x,y) \chi_{\omega}(x,y),   
\\
&f(t,-1,y) = f(t,1,y) =  0,
\\
&f(0,x,y) = f^0(x,y).
\end{aligned}
\right.
\end{equation}
Defining the semigroup as 
\begin{equation*}
T(t) f^0(x,y) := \sum_{n \in \Z} f_n(t,x) e^{i n y},
\end{equation*}
with $f_n := e^{-t A_{\frac{n}{\pi}} } f_n^0$ we get that its infinitesimal generator is an extension of the singular Grushin operator on $C^\infty_0([ (-1,0) \cup (0,1) ] \times \T)$. As in Proposition~\ref{prop_bien_pose_2d}, this semigroup leads to a unique mild solution of (\ref{grushin_2d_tore}). 
As $0 < \frac{\gamma}{2} \left( \frac{\gamma}{2} + 1 \right) < \frac{3}{4}$ for $\gamma \in (0,1)$, essentially with the same proof as Sect.~\ref{sect_continuation_unique}, we would obtain approximate controllability for any $\gamma \in (0,1)$.
\end{rmq}

\section{Conclusion, open problems and perspectives}

In this paper we have investigated the approximate controllability properties for a \textsc{2d} Grushin equation which presents both a degeneracy and an inverse square singularity on the internal set $\{x=0\}$. As the associated operator possesses several self-adjoint extensions, the functional setting in which we study the well posedness and unique continuation for the adjoint system is crucial. This functional setting relies on a precise study of the \textsc{1d} associated operators and the design of a self-adjoint extension of the singular operator with suitable transmission conditions across the singularity.

Using classical unique continuation results for uniformly parabolic operators, the study of unique continuation is reduced to the study of a \textsc{1d} problem with a boundary inverse square singularity. The proof of unique continuation is ended with a suitable Carleman type estimate that relies on a Hardy inequality.

An interesting open problem coming from this work is the question of null controllability in the case $\nu \in (0,1)$. The classical strategy would be to prove uniform observability for the \textsc{1d} adjoint systems. This has been done in the case where there is no singular potential in \cite{BeauchardCannarsaGuglielmi} and with a boundary singular potential in \cite{CannarsaGuglielmi13}. The Carleman type estimate we proved in this paper might not be directly used as it holds true only for the regular part of the coefficient $g_n$. Dealing with the singular part in Carleman type estimates is quite tricky as we cannot perform integrations by part on the singular part. The other difficulty relies on the fact that we want these estimates to be uniform with respect to $n$.

\paragraph*{Acknowledgements :} The author thanks K.~Beauchard for having drawn his attention to this problem and for fruitful discussions. The author thanks M.~Gueye and D.~Prandi for interesting discussions.

\bibliography{biblio}
\bibliographystyle{plain}

\end{document}